\theoremstyle{definition} % default{plain}
\newtheorem{thm}{Theorem}[section]
\newtheorem{prop}[thm]{Proposition}
\newtheorem{cor}[thm]{Corollary}
\theoremstyle{definition}
\newtheorem{defn}[thm]{Definition}
\newtheorem{exmp}[thm]{Example}
\theoremstyle{remark}
\newtheorem{rem}[thm]{Remark}
\begin{document}

\title{Sets of uniqueness for uniform limits of polynomials in several complex variables}
\author{K. Makridis  and  V. Nestoridis}
\date{19 April 2013}
\maketitle

\begin{abstract}
\noindent 
We investigate the sets of uniform limits $A(\overline{B}_n)$, $A(\overline{D}^I)$ of polynomials on the closed unit ball $\overline{B}_n$ of $\mathbb{C}^n$ and on the cartesian product $\overline{D}^I$ where $I$ is an arbitrary set and $\overline{D}$ is the closed unit disc in $\mathbb{C}$. We introduce the notion of set of uniqueness for $A(\overline{D}^I)$ (respectively for $A(\overline{B}_n)$) for compact subsets $K$ of $T^I$ (respectively of $\partial \overline{B}_n$) where $T=\partial D$ is the unit circle. Our main result is that if $K$ has positive measure then $K$ is a set of uniqueness. The converse does not hold. Finally, we do a similar study when the uniform convergence is not meant with respect to the usual Euclidean metric in $\mathbb{C}$, but with respect to the chordal metric $\chi$ on $\mathbb{C} \cup \{\infty \}$.  
\end{abstract}

\noindent 
\textbf{A.M.S classification No}
\\
primary 32A30, 30E10, 28A35
\\
secondary 30J99, 32A38, 46G20
~\\
~\\
\noindent 
\textbf{Keywords}
\\
polynomial approximation, spherical approximation, chordal metric, polydisc, Euclidean ball, set of uniqueness, Privalov's theorem, Fubini's theorem, product of measures, distinguished boundary, peak set, infinite dimensional holomorphy, seperate holomorphy, Hartogs's theorem, power series in several complex variables.

\section{Introduction}

If $D$ is the open unit disc in $\mathbb{C}$ and $\overline{D}$ its closure, then the set of the uniform limits on $ \overline{D} $ of polynomials (with respect to the usual Euclidean metric in $\mathbb{C}$) is the well-known disc algebra $A(\overline{D})$; that is the set of all functions $f:\overline{D} \to \mathbb{C}$ continuous on $\overline{D}$ and holomorphic in $D$.

By Privalov's theorem, a compact set $K\subseteq \partial D = T$ with positive measure is a set of uniqueness for $A(\overline{D})$; that is if $f,g \in A(\overline{D})$ coincide on $K$, then they coincide on $\overline{D}$. This notion of set of uniqueness is compatible with the ones in \cite{Car.} and \cite{Hru.}. In fact, the converse also holds: a compact set $K\subseteq T$ is a set of uniqueness for $A(\overline{D})$ if and only if $K$ has a positive measure.

\noindent We extend some of the previous results in several complex variables, when $\overline{D}$ is replaced by $\overline{D}^I$ ($I$  arbitrary set) or the unit ball $\overline{B}_n$ of $\mathbb{C}$.

First, we investigate the set of the uniform limits of polynomials. Of course, every polynomial depends on a finite number of variables, even if $I$ is infinite. Thus, we find the classes $A(\overline{D}^I)$ and $A(\overline{B}_n)$ respectively. The class $A(\overline{D}^I)$ contains exactly all functions $f:\overline{D}^I \to \mathbb{C}$ continuous on $\overline{D}^I$ (where $\overline{D}^I$ is endowed with the cartesian topology) which separately as functions of each variable belong to $A(\overline{D})$.
The class $A(\overline{B}_n)$ contains exactly all\\ functions $f:\overline{B}_n \to \mathbb{C}$ continuous on $\overline{B}_n$ and holomorphic in the unit ball $B_n$.
By Hartogs's theorem, if $I$ is finite, this implies that $f$ has a power series development in $D^I$. In the case where $I$ is infinite we do not need power series expansions on $D^I$, since separate holomorphicity and continuity are sufficient and necessary for our purposes.

We consider $T^I$ $(T= \partial D)$ the distinguished boundary of $ \overline{D}^I$ and we introduce the notion when a compact set $K\subseteq T^I $ is a set of uniqueness for $A(\overline{D}^I)$. Our main result is that if a compact set  $K\subseteq T^I $ has positive measure (with respect to the natural measure on $T^I$), then $K$ is a set of uniqueness for $A(\overline{D}^I)$. This is based on Privalov's theorem \cite{Koo.} combined with some versions of Fubini's theorem \cite{Jes.}. We also give some examples of compact sets $K \subseteq T^I$ with zero measure which are also sets of uniqueness for  $A(\overline{D}^I)$, provided that $I$ contains at least two elements.

The boundary $\partial \overline{B}_n$ of the ball of $\mathbb{C}^n$  also carries a natural measure. We introduce the notion of set of uniqueness of $A(\overline{B}_n)$  (being compact subsets of $\partial \overline{B}_n$) and we prove that if $K \subseteq \partial \overline{B}_n$ has a positive measure, then $K$ is a set of uniqueness for $A(\overline{B}_n)$. If $n \geq 2$ the converse fails.

Next, we repeat all the previous study by replacing the usual Euclidean metric on $\mathbb{C}$ by the chordal metric $\chi$ on $\mathbb{C} \cup \{\infty\}$. We investigate the set of uniform limits on  $\overline{D}^I$ or $\overline{B}_n$ of polynomials with respect to $\chi$. Thus, we find the classes $\tilde A(\overline{D})$ , $\tilde A(\overline{D}^I)$ and $\tilde A(\overline{B}_n)$. The class $\tilde A(\overline{D})$ contains $A(\overline{D})$ and is strictly larger, because it contains the function $\frac{1}{1-z}$ which does not belong to $A(\overline{D})$. The precise statement is that a function $f: \overline D \to  \mathbb{C} \cup \{\infty\}$ belongs to
$\tilde A(\overline{D})$ if and only if $f \equiv \infty$ or if $f$ is continuous on $\overline D$, $f(D) \subseteq \mathbb{C}$ and $f$ is holomorphic in $D$ (\cite{Bro.Gau.Her.}, \cite{Nes.}).

A compact set $K \subseteq T= \partial \overline D$ is a set of uniqueness for $\tilde A(\overline{D})$ if and only if it has positive measure. Furthermore, the class $\tilde A(\overline{D}^I)$ contains exactly all functions $f: \overline{D}^I \to \mathbb{C} \cup \{\infty\}$ continuous on $\overline{D}^I $  (where $\overline{D}^I $ is endowed with the cartesian topology), which separately for each variable belongs to $\tilde A(\overline{D})$. 

We introduce the notion of a set of uniqueness for $\tilde A(\overline{D}^I)$ for compact subsets $K \subseteq T^I (T= \partial D)$ and we prove that if $K$ has positive measure, then it is a set of uniqueness for $\tilde A(\overline{D}^I)$. If $I$ contains at least two elements, the converse fails. Since $ A(\overline{D}^I) \subseteq \tilde A(\overline{D}^I)$, every set of uniqueness for $\tilde A(\overline{D}^I)$ is also a set of uniqueness for $A(\overline{D}^I)$. We do not know if the converse holds.

If we endow $A(\overline{D}^I)$ and $\tilde A(\overline{D}^I)$ with their natural metrics they become complete metric spaces. In fact, $A(\overline{D}^I)$ is a Banach algebra. Furthermore, the relative topology of $A(\overline{D}^I)$  from $\tilde A(\overline{D}^I)$ coincides with the natural topology of $A(\overline{D}^I)$ and $A(\overline{D}^I)$ is open and dense in  $\tilde A(\overline{D}^I)$.

Finally, we obtain similar results when $\overline{D}^I$ is replaced by $\overline{B}_n$. We notice that in the proof of the main results for $\overline{B}_n$ we use the analogue result for $\overline{D}^I$.

We mention that our methods of proof of the main theorems lead us naturally to versions of Fubini's theorem for infinitely many variables, countable or uncountable, some of which are open problems (\cite{Jes.}, \cite{Mah.}).

Finally we give a few examples of functions belonging to the previous studied classes. Let $\displaystyle{f((z_j)^ \infty_{j =1})= \sum_{j=1}^\infty \frac{z_j}{j^2}}$\;for all  $(z_j)^ \infty_{j=1} \in \overline{D}^ \mathbb{N}$;\\ 
then $f \in A(\overline{D}^ \mathbb{N})$.

Let $g(z_1,z_2)= \frac{1}{1-z_1z_2}$; then $g \in \tilde A(\overline{D}^2)$. The previous function $f$ belongs to $A(\overline{D}^\mathbb{N})$ and its image is bounded; therefore, if $|c|$ is big enough, the function $c+f(z_1,z_2)$ does not vanish at any point of $A(\overline{D}^\mathbb{N})$. Then the function $\frac{c+f(z_1,z_2)}{1-z_1}$ also belongs to $\tilde A(\overline{D}^\mathbb{N})$ and depends on all variables $z_1,z_2,...$. What is a less trivial example of a function belonging to $\tilde A(\overline{D}^\mathbb{N})$? The class $A(\overline{B}_n)$ is well-known. What are non-trivial examples of functions belonging to $\tilde A(\overline{B}_n)$? What about the functions $\omega(z_1,z_2)= \frac{1}{1-z_1}$ and $T(z_1,...z_n)= \frac{1}{1-z_1^2-z_2^2-...z_n^2}$ with $(z_1,z_2,...,z_n) \in \partial \overline{B}_n$?

An open issue is to study the structure of the element of $\tilde A(\overline{D}^I)$ and $\tilde A(\overline{B}_n)$. The cases of $A(\overline{D}^I)$ and $A(\overline{B}_n)$ have already been studied if $I$ is a finite set. What happens if $I$ is an infinite set? What is a characterization of the zero sets of elements of $\tilde A(\overline{D}^I)$, $\tilde A(\overline{B}_n)$ and $A(\overline{D}^I)$, $A(\overline{B}_n)$ when $I$ is infinite? What can be said about compact sets of interpolation for the previous classes? What about peak sets or null-sets? (see \cite {Rud2.}, \cite{Rud3.}).

\section{Preliminaries}

Mergelyan's theorem states that if  $K \subseteq \mathbb{C}$ is a compact set with $\mathbb{C} \setminus K$ connected, then every function $f \in A(K)$ can be uniformly approximated on $K$ by polynomials with respect to the usual Euclidean metric on $\mathbb{C}$ (\cite{Rud1.}); where $A(K)$ contains exactly all continuous functions $f:K \to \mathbb{C}$ which are holomorphic in $K^ \circ$ (if $K^ \circ= \emptyset$, then $A(K)= \mathbb{C}(K)$). It follows easily that if $K \subseteq \mathbb{C}$ is compact and $\mathbb{C} \setminus K$ is connected, then $A(K)$ coincides with the set of uniform limits of polynomials on $K$ with respect to the usual Euclidean metric on $\mathbb{C}$.

The proof of Mergelyan's theorem is complicated in the general case; however if $K$ is a compact disc with radius $r, 0<r<+ \infty$, the proof is elementary.

Let $D$ denote the open unit disc and $\overline D$ its closure. Let $T= \partial D$ denote the unit circle and $\frac{d \theta}{2 \pi}$ be the normalised one-dimensional Lebesgue measure on $T$. Then we have the following theorem.

\begin{thm}
Let $f$ be a holomorphic function on $D$ and $J \subseteq T$ be a measurable set with strictly positive one-dimensional measure (length). If, for every $\zeta \in J$ the non-tangential limit of $f(z)$, as $D\ni z \to \zeta$, exists and equals to zero, then $f \equiv 0$.
\end{thm}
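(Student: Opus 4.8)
The plan is to reduce the statement to the classical uniqueness theorem for \emph{bounded} holomorphic functions on $D$: a function in $H^\infty(D)$ whose non-tangential boundary values vanish on a subset of $T$ of positive measure must be identically zero. The difficulty is that the hypothesis gives no global control on $f$ — an arbitrary holomorphic function on $D$ need not have non-tangential limits anywhere outside $J$, and need not be bounded — so the essential work is to manufacture, out of the pointwise hypothesis on $J$, a subregion of $D$ on which $f$ is genuinely bounded and on a large part of whose boundary $f$ still has boundary value $0$.

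First I would fix an aperture $\alpha$ and attach to each $\zeta \in J$ a truncated Stolz angle $\Gamma_\alpha(\zeta) \subseteq D$ with vertex at $\zeta$. Since the non-tangential limit of $f$ at each $\zeta \in J$ equals $0$, the function $f$ is in particular bounded along each such cone near its vertex. Replacing $J$ by a compact subset of positive measure and applying a measure-theoretic exhaustion of Egorov/Lusin type, I would extract a compact set $E \subseteq J$ with $|E|>0$, a radius $\delta>0$, and a constant $M$ such that $|f(z)| \le M$ for every $\zeta \in E$ and every $z \in \Gamma_\alpha(\zeta)\cap\{|z-\zeta|<\delta\}$. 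This uniformization is the main obstacle: it is where the purely pointwise hypothesis is converted into a genuine bound over a positive-measure family of cones.

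Next I would assemble the sawtooth region $\Omega = \bigcup_{\zeta\in E}\bigl(\Gamma_\alpha(\zeta)\cap\{|z-\zeta|<\delta\}\bigr)$, passing if necessary to a connected component whose boundary meets $T$ in a subset of $E$ of positive length; this yields a simply connected domain with rectifiable boundary that touches $T$ exactly along (a positive-measure part of) $E$. On $\Omega$ the function $f$ is bounded and holomorphic, so through a conformal map $\varphi:D\to\Omega$ the pullback $g=f\circ\varphi$ lies in $H^\infty(D)$. Because $\partial\Omega$ is rectifiable, harmonic measure on $\Omega$ is mutually absolutely continuous with arc length and $\varphi$ has a nonzero angular derivative at almost every boundary point; hence $E$, having positive length, pulls back to a subset of $T$ of positive measure, and the vanishing non-tangential limits of $f$ on $E$ transfer to vanishing non-tangential limits of $g$ on this set.

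Finally I would invoke the disc uniqueness theorem itself, which rests on the fact that $\int_T \log|g^*|\,\frac{d\theta}{2\pi} > -\infty$ for every $g\not\equiv 0$ in $H^\infty(D)$: since the boundary values $g^*$ vanish on a set of positive measure, this integral is $-\infty$, forcing $g\equiv 0$. Therefore $f\equiv 0$ on $\Omega$, and by the identity theorem $f\equiv 0$ on all of $D$.
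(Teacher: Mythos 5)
The paper does not prove this statement at all: Theorem 2.1 is quoted as Privalov's theorem and used as a black box, with a citation to Koosis's book, where the proof on record is precisely the sawtooth argument you describe. So your proposal cannot match ``the paper's proof,'' but it is a correct reconstruction of the classical proof behind the citation: the Egorov/Lusin-type extraction of a compact $E \subseteq J$ of positive measure with a uniform bound $M$ on truncated cones of fixed aperture is exactly the step that converts the pointwise hypothesis into boundedness on a sawtooth region; rectifiability of $\partial\Omega$ gives $\varphi' \in H^1$, hence (F.\ and M.\ Riesz and $\log|\varphi'^{*}| \in L^1$) mutual absolute continuity of arc length and harmonic measure, so $E$ pulls back to a set of positive measure on $T$; and the $\log|g^{*}| \in L^1$ dichotomy for $H^\infty$ functions finishes, with the identity theorem propagating $f \equiv 0$ from $\Omega$ to $D$. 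Two details are worth pinning down. First, a connected component of the union of truncated cones is not obviously simply connected with Jordan rectifiable boundary; the standard remedy is to adjoin a central disc $\{|z| \le 1-\delta/2\}$, harmless since $f$ is bounded on compact subsets of $D$, which makes $\Omega$ connected and starlike about $0$, hence simply connected, and keeps $\partial\Omega$ rectifiable with $\partial\Omega \cap T \subseteq E$. Second, the transfer of the boundary value $0$ to $g = f\circ\varphi$ is cleanest not via angular derivatives but via Lindel\"of's theorem: $f \to 0$ along a curve inside $\Gamma_\alpha(\zeta)$ ending at $\zeta$, this pulls back under the Carath\'eodory extension to a curve in $D$ terminating at $\varphi^{-1}(\zeta)$, and since $g$ is bounded, a limit along one curve upgrades to the non-tangential limit at that point; this is needed at almost every point of $E$, which suffices. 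With these routine repairs your argument is complete and is, in substance, the proof in the reference the paper leans on.
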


\begin{defn}
Let $K \subseteq T$ be a compact set. Then $K$ is called a set of uniqueness for $A(\overline D)$, if for any $f,g \in A(\overline D)$, the following holds.
If $f_{|K}= g_{|K}$ then $f \equiv g$.
\end{defn}

It follows from Theorem $2.1$ that if $K \subseteq T$ is a compact set with positive measure, then $K$ is a set of uniqueness for $A(\overline D)$. If $F \subseteq T$ is a compact set with zero measure, then there exists a peak-function $\phi \in A(\overline D)$, such that $\phi _{\rvert F} \equiv 1$ and $|\phi (z)|<1$ for all $z \in \overline{D} \setminus {F}$ (\cite{Hof.}). It follows easily that $F$ is not a set of uniqueness for $A(\overline D)$. Therefore, the following holds.

\begin{thm}
Let $K \subseteq T$ be a compact set. Then $K$ is a set of uniqueness for $A(\overline D)$ if and only if $K$ has positive measure.
\end{thm}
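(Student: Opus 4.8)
The plan is to prove the two implications separately; both reduce to results already in hand, namely Theorem 2.1 (Privalov's theorem) for the forward direction and the existence of a peak function for the converse. Recall from Definition 2.2 that $K$ is a set of uniqueness precisely when any two elements of $A(\overline D)$ agreeing on $K$ must coincide on all of $\overline D$.

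First I would show that positive measure implies uniqueness. Suppose $K\subseteq T$ is compact with positive measure and let $f,g\in A(\overline D)$ satisfy $f_{|K}=g_{|K}$. Set $h=f-g$; since $A(\overline D)$ is closed under subtraction, $h\in A(\overline D)$, so $h$ is continuous on $\overline D$ and holomorphic in $D$. The key point is that because $h$ is continuous on all of $\overline D$, for every $\zeta\in K$ the non-tangential limit of $h(z)$ as $D\ni z\to\zeta$ exists and equals $h(\zeta)=0$. Thus $h$ is holomorphic in $D$ and its non-tangential boundary value vanishes on the set $K$, which has strictly positive length. Applying Theorem 2.1 gives $h\equiv 0$ on $D$, and continuity then yields $h\equiv 0$ on $\overline D$, that is $f\equiv g$. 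Hence $K$ is a set of uniqueness for $A(\overline D)$.

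For the converse I would argue by contraposition: if $K$ has zero measure, then $K$ is not a set of uniqueness. By the peak-function result cited above (\cite{Hof.}), there exists $\phi\in A(\overline D)$ with $\phi_{|K}\equiv 1$ and $|\phi(z)|<1$ for every $z\in\overline D\setminus K$. Taking $f=\phi$ and $g\equiv 1$, both functions lie in $A(\overline D)$ and agree on $K$; yet they do not coincide on $\overline D$, since $|\phi(z)|<1$ forces $\phi(z)\neq 1$ at each of the (nonempty set of) points $z\in\overline D\setminus K$. Therefore $K$ fails to be a set of uniqueness.

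I do not expect a genuine obstacle in either direction, as the substantive analytic content is carried by the two cited inputs. The only subtlety worth flagging is the reduction of the non-tangential hypothesis of Theorem 2.1 to the plain continuity of $h$ on $\overline D$: it is this passage from continuity to non-tangential boundary values that licenses the application of Privalov's theorem. Once that observation is made, the forward direction is immediate, and the converse follows at once from the availability of the peak function on a null set.
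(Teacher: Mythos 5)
Your proof is correct and follows essentially the same route as the paper: the forward direction applies Privalov's theorem (Theorem 2.1) to $h=f-g$, noting that continuity on $\overline D$ supplies the non-tangential boundary values, and the converse uses the peak function from \cite{Hof.} on a compact null set together with the constant function $1$. The only difference is presentational: you spell out the passage from continuity to non-tangential limits, which the paper leaves implicit.
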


In complex analysis of one variable very often we are dealing with functions taking the value $\infty$,  as well. In order to talk about uniform convergence (approximation) for such functions we need to replace the usual Euclidean metric on $\mathbb{C}$ by a metric on $\mathbb{C} \cup \{\infty\}$. Since all metrics on the compact space $\mathbb{C} \cup \{\infty\}$ compatible with the usual topology of $\mathbb{C} \cup \{\infty\}$ are uniformly equivalent, it suffices to work with any such metric.
Such a very well-known metric is the chordal metric $\chi$ on $\mathbb{C} \cup \{\infty\}$. This is the metric induced on $\mathbb{C} \cup \{\infty\}$ via stereographic projection by the Euclidean metric of $\mathbb{R}^3$ restricted to $S^2 = \{ (x_1,x_2,x_3) \in\mathbb{R}^3 : x_1^2+x_2^2+x_3^2 = 1 \}$. Then,

\begin{itemize}
\item $\chi (a,b) = \frac{|a-b|}{\sqrt{1+|a|^2} \sqrt{1+|b|^2}}$ \; if $a,b \in \mathbb{C}$

\item $\chi (a,\infty ) =\chi (\infty, a) = \frac{a}{\sqrt{1+|a|^2}}$ \; if $a \in \mathbb{C}$

\item and $\chi (\infty, \infty) = 0$.
\end{itemize}

It is an open question what is the general form that Mergelyan's theorem takes, if the usual Euclidean metric on $\mathbb{C}$ is replaced by the chordal metric in $\mathbb{C} \cup \{\infty\}$ (\cite{Nes.}). However, in particular cases we know the answer. Such a case is when $K$ is a compact disc with radius $r, 0<r<+ \infty$.

\begin{defn}
A function $f: \overline{D} \to \mathbb{C} \cup \{\infty\}$ belongs to the class $\tilde A( \overline{D})$ if and only if $f$ is identically equal to $\infty$ or $f(D) \subseteq \mathbb{C}, \;  f_{|D}$ is holomorphic and for every $\zeta \in T= \partial D$ the limit 
$\displaystyle{\lim_{z \to \zeta}{f(z)}}$ exists in $\mathbb{C} \cup \{\infty\}$.
\end{defn}

\begin{rem} It is equivalent to say that $f \in \tilde A( \overline{D})$ if and only if $f \equiv \infty$ or $f: \overline{D} \to \mathbb{C} \cup \{\infty\}$ is continuous, $f(D) \subseteq \mathbb{C}$ and $f_{|D}$ is holomorphic. Further $A(\overline{D}) \subseteq \tilde A( \overline{D})$ and $A(\overline{D}) \neq \tilde A( \overline{D})$ because the function $f(z)= \frac{1}{1-z}$ belongs to $\tilde A( \overline{D}) \setminus A(\overline{D})$.
\end{rem}

\begin{thm}(\cite{Nes.})
Let $f: \overline{D} \to \mathbb{C} \, \cup \, \{\infty\}$ be any function. Then $f \in \tilde A( \overline{D})$ if and only if, there exists a sequence $p_n$ of polynomials of one variable such that 
\[
\sup \{\chi (p_n(z),f(z)) : |z| \leq 1 \}
 \xrightarrow{\text{$n \to  + \infty$}} 0.
\]

\end{thm}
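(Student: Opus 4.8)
The plan is to prove the two implications separately, the forward direction ($f \in \tilde A(\overline D) \Rightarrow$ chordal polynomial approximation) being the routine one and the converse being where the real work lies.

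For the forward direction, I would first dispose of the case $f \equiv \infty$ by taking the constant polynomials $p_n \equiv n$, for which $\chi(n,\infty) \to 0$. When $f \not\equiv \infty$ we have $f(D) \subseteq \mathbb{C}$, $f_{|D}$ holomorphic and $f$ continuous on $\overline D$ into $\mathbb{C} \cup \{\infty\}$. The key device is dilation: set $f_r(z) = f(rz)$ for $0 < r < 1$. Since $z \mapsto rz$ maps $\overline D$ into the open disc $D$, where $f$ is finite, each $f_r$ is finite-valued on $\overline D$ and in fact extends holomorphically to $\{|z| < 1/r\}$, a neighbourhood of $\overline D$; hence the partial sums of its Taylor series converge to $f_r$ uniformly on $\overline D$ in the Euclidean, and therefore in the chordal, metric. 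It then remains to check that $f_r \to f$ uniformly in $\chi$ as $r \to 1^-$: because $\overline D$ is compact, $f$ is uniformly continuous into $(\mathbb{C} \cup \{\infty\}, \chi)$, and $\sup_{z \in \overline D}|rz - z| \leq 1-r \to 0$, so $\sup_z \chi(f(rz), f(z)) \to 0$. A diagonal choice of polynomials yields the desired approximation.

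For the converse, suppose polynomials $p_n$ satisfy $\sup_{|z|\le 1}\chi(p_n(z), f(z)) \to 0$. Since each $p_n$ is continuous and uniform $\chi$-limits of continuous maps into the compact space $\mathbb{C}\cup\{\infty\}$ are continuous, $f$ is continuous on $\overline D$; this already yields the boundary limits required in Definition 2.4. If $f \equiv \infty$ we are done, so assume $f \not\equiv \infty$, whence $f$ is finite on a nonempty open subset of $D$. I would exploit two standard facts about $\chi$: the inversion $w \mapsto 1/w$ is a $\chi$-isometry (so $1/p_n \to 1/f$ uniformly in $\chi$ as well), and on any family of values bounded by some $M$ the metrics $\chi$ and $|\cdot|$ are equivalent. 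Consequently, on a small closed disc around a point where $f$ is finite, $f$ is bounded and $p_n \to f$ uniformly in the Euclidean metric, so $f$ is holomorphic there; symmetrically, near a point $z_0$ with $f(z_0)=\infty$ the function $1/f$ is finite and $1/p_n \to 1/f$ uniformly in $|\cdot|$, so $1/f$ is holomorphic near $z_0$ and vanishes at $z_0$. Thus each point of $D$ with $f=\infty$ is either isolated (an honest pole) or has a whole neighbourhood on which $f \equiv \infty$; a connectedness argument, using that $\{f \neq \infty\}$ meets $D$, rules out the latter, so $f$ is meromorphic on $D$ with at most a discrete set of poles.

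The crux is to show this pole set is empty, i.e.\ $f(D) \subseteq \mathbb{C}$. Suppose, for contradiction, that $f$ had a pole of order $k \geq 1$ at some $z_0 \in D$, and choose a small circle $\gamma \subseteq D$ about $z_0$ on which $f \neq 0, \infty$ and enclosing no other pole or zero. On and inside $\gamma$ the function $g = 1/f$ is holomorphic, nonvanishing on $\gamma$, with a single zero of order $k$ inside, so the winding number of $g \circ \gamma$ about $0$ equals $+k > 0$. But $g_n = 1/p_n \to g$ uniformly on $\gamma$ in the Euclidean metric, so for large $n$ the winding numbers of $g_n \circ \gamma$ and $g \circ \gamma$ about $0$ agree; on the other hand $g_n = 1/p_n$ is a rational function with \emph{no} zeros in $\mathbb{C}$ (since $p_n$ never takes the value $\infty$) and poles only at the zeros of $p_n$, so by the argument principle its winding number about $0$ equals $-(\text{number of zeros of } p_n \text{ inside } \gamma) \leq 0$. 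This contradicts $+k > 0$, forcing the pole set to be empty. Hence $f_{|D}$ is holomorphic and finite, and together with continuity on $\overline D$ this places $f$ in $\tilde A(\overline D)$ by the characterisation in the Remark following Definition 2.4. The main obstacle is exactly this last step: the comparison of winding numbers, where the asymmetry between the zero-free rational approximants $1/p_n$ and the limit $1/f$ with its forced positive winding number is what excludes interior poles.
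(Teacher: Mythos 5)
Your proof is correct; note, however, that the paper does not actually prove Theorem 2.6 --- it is imported from \cite{Nes.} --- so the fair comparison is with the proof ingredients the paper displays elsewhere, namely the sketch following Proposition 4.1. There the forward direction is handled exactly as you do: dilation $f_r(z)=f(rz)$, partial sums of the Taylor expansion of $f_r$ on a neighbourhood of $\overline{D}$, the inequality $\chi(A,B)\leq|A-B|$, and the triangle inequality; your appeal to uniform continuity of $f:\overline{D}\to(\mathbb{C}\cup\{\infty\},\chi)$ to get $\sup_z\chi(f(rz),f(z))\to 0$ is the same device. Where you genuinely diverge is in the converse: the paper's route (visible in the Proposition 4.1 discussion) is to apply Hurwitz's theorem to conclude that $f^{-1}(\{\infty\})\cap D$ is open, so that by connectedness either $f\equiv\infty$ or $f(D)\subseteq\mathbb{C}$; no meromorphy or pole analysis ever arises. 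You instead derive the weaker dichotomy (isolated pole versus locally $\equiv\infty$), kill the second horn by connectedness, and then exclude poles by the winding-number comparison between $1/f$ (index $k\geq 1$ around a small circle $\gamma$) and the zero-free rational functions $1/p_n$ (index $-P\leq 0$). That computation is sound --- the leash estimate $\sup_\gamma|g_n-g|<\min_\gamma|g|$ does equalize the indices --- but it is in effect a hand-made proof of the Hurwitz step: since the $1/p_n$ are eventually zero-free on a whole disc around a putative pole $z_0$ (they are $\chi$-close to $1/f$, which is small there) and converge to $1/f$ uniformly in the Euclidean metric, Hurwitz gives $1/f\equiv 0$ or $1/f$ nonvanishing, and $1/f(z_0)=0$ then forces $f\equiv\infty$ near $z_0$; so isolated $\infty$-points never occur and your pole-exclusion paragraph compresses to one line. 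One point you pass over too quickly: ``on bounded families $\chi$ and $|\cdot|$ are equivalent'' must be supplemented by the observation that if $|f|\leq M$ on a set and $\sup\chi(p_n,f)$ is less than roughly $(1+M^2)^{-1/2}$, then the $p_n$ (respectively $1/p_n$ on $\gamma$) are themselves uniformly bounded there, being kept $\chi$-away from $\infty$; this is exactly what licenses the passage from $\chi$-uniform to Euclidean-uniform convergence that you use three times (near finite points, on $\gamma$, and near $z_0$). With that routine patch both directions are complete, and your statement-level conclusion via Remark 2.5 is the right way to close.
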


Thus, the function $\frac{1}{1-z}$ can be approximated uniformly on $D$ by polynomials with respect to the chordal metric $\chi$ on $\mathbb{C} \cup \{\infty\}$, but not with respect to the usual Euclidean metric on $\mathbb{C}$.
\\
Privalov's theorem implies that if $f,g \in \tilde A(\overline D)$ coincide on a set $K \subseteq T$ of positive measure, then $f \equiv g$ (\cite{Nes.}). If $F \subseteq T$ is a compact set with zero measure, then there is a peak function $\phi \in A(\overline D) \subseteq \tilde A(\overline D)$, $\phi_{|F} \equiv 1$, $|\phi (z)|<1$ for all $z \in \overline D \setminus F$ (\cite{Hof.}). Thus, the functions $\phi$ and $1$ belong to $\tilde A(\overline D)$, they coincide on $F$ but they are not equal.

\begin{defn}
Let $K \subseteq T$ be a compact set. The set K is called a set of uniqueness for $\tilde A(\overline D)$, if the following holds.
If $f,g \in \tilde A(\overline D)$ coincide on $K$, then $f \equiv g$.
\end{defn}
By the previous discussion we have proved the following.

\begin{thm}
A compact set $K \subseteq T$ is a set of uniqueness for $\tilde A(\overline D)$, if and only if $K$ has positive measure; thus, compact sets of uniqueness for $\tilde A(\overline D)$ coincide with those  for $A(\overline D)$.
\end{thm}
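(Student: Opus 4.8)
My plan is to prove the two implications separately, since the preceding discussion already assembles all the tools and the work is mainly one of organisation together with isolating the essential boundary fact.

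I would begin with the converse, in contrapositive form: if $K$ has zero measure then $K$ is not a set of uniqueness. Here I invoke the peak function $\phi \in A(\overline D) \subseteq \tilde A(\overline D)$ with $\phi_{|K} \equiv 1$ and $|\phi(z)| < 1$ for all $z \in \overline D \setminus K$, whose existence is recorded earlier. Then $\phi$ and the constant function $1$ both belong to $\tilde A(\overline D)$, they coincide on $K$, yet $\phi \not\equiv 1$. Hence $K$ fails to be a set of uniqueness, and consequently every compact set of uniqueness for $\tilde A(\overline D)$ must have positive measure.

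For the uniqueness direction I would take $f, g \in \tilde A(\overline D)$ coinciding on a compact $K \subseteq T$ with $|K| > 0$ and aim at $f \equiv g$, reducing matters to Privalov's theorem (Theorem 2.1). First I dispose of the degenerate cases: if $f \equiv \infty$ and $g \equiv \infty$ there is nothing to prove. The key auxiliary fact, which I would isolate as a lemma, is that any $h \in \tilde A(\overline D)$ with $h \not\equiv \infty$ attains the value $\infty$ on $T$ only on a set of measure zero; equivalently, a boundary value $\infty$ on a set of positive measure forces $h \equiv \infty$. Granting this, the mixed case (say $f \equiv \infty$, $g \not\equiv \infty$) cannot occur, since it would give $g = \infty$ on $K$ with $|K|>0$. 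In the remaining case neither function is $\equiv \infty$, so both are holomorphic in $D$ with finite values; by the lemma, off a null subset of $K$ both boundary limits are finite, so the holomorphic function $f - g$ on $D$ has non-tangential limit $0$ on a subset of $K$ of positive measure, and Theorem 2.1 yields $f - g \equiv 0$.

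The main obstacle is precisely the boundary lemma, and I would reduce it again to Privalov by a Mobius transformation $\tau$ of the Riemann sphere sending $\infty$ to $0$, for instance $\tau(w) = 1/(w-a)$: if $a$ is chosen outside the range $h(D)$, then $\tau \circ h$ is holomorphic and finite in $D$ with non-tangential limit $0$ on $\{h = \infty\} \cap T$, whence Theorem 2.1 forces $\tau \circ h \equiv 0$, i.e. $h \equiv \infty$, a contradiction unless that boundary set is null. The delicate point is that the substitution must not introduce interior poles; this is handled by working with $1/h$ away from the isolated zeros of $h$, or by choosing $a$ outside the closed range when $h(D)$ is not dense, and ultimately it rests on such functions being of bounded characteristic, so that finite non-tangential limits exist almost everywhere. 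This Privalov-type input is exactly the content borrowed from \cite{Nes.}, which I would cite for the lemma while giving the reduction above to make the argument self-contained.
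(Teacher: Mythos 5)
Your overall route coincides with the paper's. The paper proves this theorem by the discussion immediately preceding it: the positive-measure direction is quoted from \cite{Nes.} as a consequence of Privalov's theorem, and the zero-measure direction is exactly your argument with the Hoffman peak function $\phi \in A(\overline D)\subseteq \tilde A(\overline D)$. Your added structure for the positive-measure direction --- the three-case analysis and the reduction to Theorem 2.1 via the boundary lemma that $h\in\tilde A(\overline D)$, $h\not\equiv\infty$, satisfies $|h^{-1}(\{\infty\})\cap T|=0$ --- is sound, and since you ultimately cite \cite{Nes.} for that lemma, your proof stands at the same level of rigor as the paper's.

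However, your self-contained sketch of the lemma has a genuine soft spot that you should not paper over. First, an omitted value $a\notin h(D)$ need not exist: let $\phi$ map $D$ conformally onto $\mathbb{C}\setminus[0,+\infty)$; since the complement of this domain in the sphere is an arc, $\phi$ extends continuously to $\overline D\to\mathbb{C}\cup\{\infty\}$, so $\phi\in\tilde A(\overline D)$, and $h=\phi^2+\phi$ also lies in $\tilde A(\overline D)$ (compose with the polynomial, continuous on the sphere). For every $t\in\mathbb{C}$ the two roots of $w^2+w-t=0$ sum to $-1$ and so cannot both lie on $[0,+\infty)$; hence $h(D)=\mathbb{C}$, yet $h\not\equiv\infty$. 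Second, the fallback via $1/h$ produces a function that is only meromorphic in $D$ (poles at the zeros of $h$), and Theorem 2.1 as stated applies to holomorphic functions; you would need the Luzin--Privalov uniqueness theorem for meromorphic functions, or a localization (by continuity $|h|>1$ on $W\cap\overline D$ for some open $W\supseteq h^{-1}(\{\infty\})\cap T$, so $1/h$ is bounded and holomorphic on $W\cap D$ and a local Privalov theorem on a Jordan subdomain abutting a boundary arc of positive measure applies) --- neither of which is available verbatim in the paper. Third, the appeal to bounded characteristic is circular: if $|h^{-1}(\{\infty\})\cap T|>0$, then by uniform chordal continuity $|h(re^{i\theta})|\to\infty$ uniformly on that set as $r\to 1^-$, so $m(r,h)\to\infty$ and $h$ is precisely \emph{not} of bounded characteristic; moreover, continuity on $\overline D$ already gives boundary limits everywhere, so existence of limits is not the issue --- the uniqueness statement is. In short: your reduction of the theorem to the lemma is correct, but the lemma itself genuinely requires the input from \cite{Nes.} (or the meromorphic Privalov theorem), and your two suggested workarounds do not close it within the paper's stated toolkit.
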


We notice that in a similar way we can define $\tilde A(\overline D(w,r))$ for any compact disc with center $w \in \mathbb{C}$ and radius $r$, $0<r<+ \infty$.

Our aim is to extend some of the previous results in several complex variables and precisely to the case of polydiscs or Euclidean balls. Let $I$ be any set with at least two points. We consider the cartesian product $\overline D^I$ endowed with the cartesian topology. Then basic open sets are of the form $\prod_{i \in I} Y_i$, where all $Y_i \subseteq \overline D$ are open in the relative topology of $\overline{D}$ and $Y_i = D$ for all $i \in I$ except a finite number of $i$'s. It follows easily that if $\zeta \equiv ({\zeta}_i)_{i \in I} \in \overline D^I$ is fixed, then the set 
 $\bigcup_{F \subseteq I, \text{F fitite}} \overline D^F \times \prod_{i \in I \setminus F} \{\zeta_i\}$ 
is dense in $\overline D^F$. If $I$ is a finite or infinite denumerable set, then $\overline D^I$ is metrizable and according to Tychonoff's theorem, compact; thus, every continuous function $f: \overline D^I \to Y$ is uniformly continuous for any metric space $Y$; in particular for $Y = \mathbb{C}$ or $Y = \mathbb{C} \cup \{\infty\}$. If $I$ is a non-denumerable set, we do not have a metric at our disposal.

However, $\overline D^I$ carries a uniform structure and, as it is again compact by Tychonoff's theorem, one could try to prove that every continuous function $f: \overline D^I \to Y$ ($Y = \mathbb{C}$ or $\mathbb{C} \cup \{\infty\}$) is automatically uniformly continuous. We will avoid to follow this way, but we will prove directy the following.

\begin{prop}
Let I be an infinite set denumerable or uncountable. Let $f: \overline D^I \to Y$ be a continuous function where $(Y,d)$ is a metric space and let $\varepsilon >0$ be given. Then, there exists a finite set $F \subseteq I$, such that for every point $\zeta = (\zeta_i)_{i \in I} \in \overline D^I$ the function $g: \overline D^I \to Y$ defined by $g(z) = f(w(z))$, where $w(z) = ((w(z))i)_{i \in I}$ with 
\begin{equation*}
(w(z))_i = 
\begin{cases}
\zeta_i & \text{for } i \in I \setminus F \\
z_i & \text{for } i \in F
 \end{cases}
\end{equation*}

$z = (z_i)_{i \in I}$ satisfies $d(f(z),g(z)) < \frac{\varepsilon}{2}$ for all $z \in \overline D^I$.
\end{prop}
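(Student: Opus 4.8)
The plan is to combine the compactness of $\overline{D}^I$, guaranteed by Tychonoff's theorem, with the defining feature of the product topology: in the neighbourhood filter of any point there is a base of basic open sets that leave all but finitely many coordinates unconstrained. It is worth noticing first that the asserted conclusion is really a finite-dependence statement. Since $\zeta$ is allowed to be arbitrary, for any two points $z,z'$ that agree on $F$ one may take $\zeta = z'$ off $F$, which realises $w(z)=z'$; thus proving the proposition amounts to producing a single finite set $F \subseteq I$ with $d(f(z),f(z')) < \frac{\varepsilon}{2}$ whenever $z$ and $z'$ agree on all coordinates indexed by $F$. In other words, $f$ is, up to $\frac{\varepsilon}{2}$, a function of the finitely many variables in $F$.

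To construct $F$ I would first use continuity. For each $x \in \overline{D}^I$, continuity of $f$ at $x$ provides a basic open neighbourhood $U_x = \prod_{i \in I} Y_i^{(x)}$ of $x$, where $Y_i^{(x)}$ is open in $\overline{D}$ and $Y_i^{(x)} = \overline{D}$ for every $i$ outside some finite set $F_x \subseteq I$, and on which $f$ oscillates by less than $\frac{\varepsilon}{4}$, i.e. $d(f(x'),f(x)) < \frac{\varepsilon}{4}$ for all $x' \in U_x$. The family $\{U_x : x \in \overline{D}^I\}$ is an open cover of the compact space $\overline{D}^I$, so it admits a finite subcover $U_{x_1},\dots,U_{x_m}$. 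I then set $F = F_{x_1} \cup \cdots \cup F_{x_m}$, which is finite.

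It remains to verify that this $F$ works, and this is the step where the product structure does the essential work. Fix an arbitrary point $\zeta$ and an arbitrary $z \in \overline{D}^I$, and let $w = w(z)$ be the associated point (equal to $z$ on $F$ and to $\zeta$ off $F$). Since the $U_{x_k}$ cover $\overline{D}^I$, there is an index $k$ with $z \in U_{x_k}$. The crucial observation is that $U_{x_k}$ imposes no restriction on coordinates outside $F_{x_k} \subseteq F$: for $i \in F_{x_k}$ we have $w_i = z_i \in Y_i^{(x_k)}$, because $z \in U_{x_k}$ and $w$ agrees with $z$ on $F \supseteq F_{x_k}$, while for $i \notin F_{x_k}$ the condition $w_i \in Y_i^{(x_k)} = \overline{D}$ holds automatically. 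Hence $w \in U_{x_k}$ as well, and the triangle inequality yields $d(f(z),f(w)) \le d(f(z),f(x_k)) + d(f(x_k),f(w)) < \frac{\varepsilon}{4} + \frac{\varepsilon}{4} = \frac{\varepsilon}{2}$, which is exactly $d(f(z),g(z)) < \frac{\varepsilon}{2}$.

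The only genuine subtlety, and the main thing to get right, is precisely that replacing the coordinates outside $F$ by the arbitrary values $\zeta_i$ does not push $w$ out of the chosen neighbourhood $U_{x_k}$; this is exactly what the free coordinates of a basic open set in the product topology guarantee, and it is what lets one and the same $F$ serve every $\zeta$ simultaneously. Everything else is a routine cover-and-triangle-inequality argument. Note in particular that no metric on $\overline{D}^I$ is ever used, only the compactness supplied by Tychonoff's theorem, so the argument applies verbatim whether $I$ is countable or uncountable, as claimed.
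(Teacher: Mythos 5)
Your proof is correct and follows essentially the same route as the paper's: basic product-topology neighbourhoods on which $f$ oscillates by less than $\frac{\varepsilon}{4}$, a finite subcover by Tychonoff compactness, $F$ taken as the union of the finitely many constrained coordinate sets, the observation that $z$ and $w(z)$ lie in the same basic open set, and the triangle inequality through the center point. The framing of the statement as approximate finite dependence is a nice touch but does not change the argument.
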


\begin{proof}
Since $f$ is continuous on $\overline D^I$, for every $\tau \in \overline D^I$ there exists a basic open neighbourhood $V_{\tau}$ such that $d(f(\tau),f(\sigma)) < \frac{\varepsilon}{4}$ for all $\sigma \in V_{\tau}$. By compactness we have $V_{\tau ^1} \cup ... \cup V_{\tau ^m} = \overline{D}^I$ for some finite $m$ and ${\tau ^1}, \dots ,{\tau ^m} \in \overline{D}^I$. For every $k \in \{1, ... , m\}$ there exists a finite set $F_k \subseteq I$ and $\delta ^k >0$ so that $V_{\tau ^k} = \{z = (z_i)_{i \in I}\;|\;|z_i - \tau_i^k| < \delta ^k$ for $i \in F_k \}$.

We set $F = F_1 \cup ... \cup F_m$ and let $\zeta = ({\zeta}_i)_{i \in I}$ be arbitrary in $\overline{D}^I$. Let $z = (z_i)_{i \in I}$ and set $[w(z)]_i = {\zeta}_i$ for $i \in I \setminus F$ and $[w(z)]_i = z_i$ for $i \in F$. Because $V_{\tau ^1}\cup \dots \cup V_{\tau ^m} = \overline{D}^I$, there exists a $k \in \{1, ... , m\}$ such that $z \in V_{\tau ^k}$. Then, for $i \in F_k$ we have $|z_i - \tau_i^k| < \delta^k$. It follows that $|[w(z)]_i - \tau_i^k| < \delta^k$ for all $i \in F_k$ because $F_k \subseteq F$ and $(w(z))_i = z_i$ for all $i \in F$. Thus, $w(z) \in V_{\tau ^k}$. Therefore, $d(f(w(z)),f(\tau^k)) < \frac{\varepsilon}{4}$.
Since $z \in V_{\tau ^k}$, we also have $d(f(z),f(\tau^k)) < \frac{\varepsilon}{4}$. The triangle inequality implies that $d(f(w(z)),f(z)) < \frac{\varepsilon}{2}$ since $g(z) = f(w(z))$. The proof is complete.
\end{proof}

An immediate corollary of proposition 2.9 is the following well-known fact.

\begin{cor}
let $I$ be an uncountable set $f: \overline{D}^I \to Y$ be a continuous function, where $(Y,d)$ is a metric space. Then $f$ depends on a denumerable set of coordinates.
\end{cor}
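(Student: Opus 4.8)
The plan is to apply Proposition 2.9 repeatedly with a sequence of shrinking tolerances and take the union of the resulting finite sets. Concretely, for each positive integer $k$ I would invoke Proposition 2.9 with $\varepsilon = 2/k$, obtaining a finite set $F_k \subseteq I$ enjoying the stated property uniformly over all base points $\zeta$. Setting $S = \bigcup_{k=1}^\infty F_k$ produces a denumerable subset of $I$, being a countable union of finite sets. The claim to be proved is then that $f$ depends only on the coordinates indexed by $S$; that is, whenever $z = (z_i)_{i \in I}$ and $z' = (z'_i)_{i \in I}$ in $\overline{D}^I$ agree on every coordinate in $S$, we have $f(z) = f(z')$.

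The heart of the argument is an observation about the map $w$ from Proposition 2.9. Fix $k$ and apply the conclusion with $F = F_k$ and with the particular choice $\zeta = z'$. The resulting point $w(z)$ has coordinates equal to $z_i$ for $i \in F_k$ and equal to $z'_i$ for $i \in I \setminus F_k$. Since $F_k \subseteq S$ and $z, z'$ agree on $S$, we also have $z_i = z'_i$ for $i \in F_k$, so $w(z) = z'$ identically. Proposition 2.9 therefore gives
\[
d\bigl(f(z),f(z')\bigr) = d\bigl(f(z),f(w(z))\bigr) < \frac{\varepsilon}{2} = \frac{1}{k}.
\]

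Since $k \in \mathbb{N}$ was arbitrary, letting $k \to \infty$ forces $d(f(z),f(z')) = 0$, hence $f(z) = f(z')$. This shows that $f$ factors through the projection onto the $S$-coordinates, which is precisely the assertion that $f$ depends on only denumerably many coordinates.

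I do not expect a genuine obstacle here, since the real content has already been isolated in Proposition 2.9, and in particular in the fact that the finite set $F$ there may be chosen uniformly over all base points $\zeta$. The only point requiring care is the recognition that substituting $\zeta = z'$ collapses $w(z)$ exactly to $z'$, precisely because $z$ and $z'$ were assumed to agree on the larger set $S \supseteq F_k$; this is what converts the approximate estimate of Proposition 2.9 into an exact equality in the limit.
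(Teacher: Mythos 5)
Your proof is correct and takes essentially the same approach as the paper: both apply Proposition 2.9 along a sequence of tolerances (here $\varepsilon = 2/k$) and set $S = \bigcup_{k} F_k$, a denumerable union of finite sets. Your substitution $\zeta = z'$, which collapses $w(z)$ exactly to $z'$ and exploits the uniformity of $F_k$ over base points, is a clean way of supplying the final step that the paper leaves implicit in the phrase ``we get the result.''
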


\begin{proof}
For every $n = 1,2, ...$ there exists a finite set $F_n \subseteq I$ and a function $g_n: \overline{D}^I \to Y$ depending only on the coordinates in $F_n$ such that $d(f(z),g_n(z)) < \frac{1}{n}$ for all $z \in \overline{D}^I$. This holds because of Proposition 2.9. We set $F = \cup_{n = 1}^{\infty} F_n$ and we get the result.
\end{proof}

Finally we will also use Hartogs's theorem \cite{Nar.}. For an open set $\Omega \subseteq \mathbb{C}^n$, if we consider any function $f: \Omega \to \mathbb{C}$, then $f$ is locally represented in $\Omega$ by a power series with any center $\zeta \in \Omega$ absolutely and uniformly convergent on any closed polydisc contained in $\Omega$ with center $\zeta$, if and only if $f$ is separately holomorphic with respect to each variable. Then, the convergence is absolute and uniform on each compact Euclidean ball contained in $\Omega$ as well (\cite{Rud2.},\cite{Rud3.}).

Finally we mention that, if $\{X_i\}_{i \in I}$ are compact spaces endowed with regular 
Borel probability measures $\{\mu_i\}_{i \in I}$, then we can define a regular Borel probability measure $\mu = \prod_{i \in I} \mu_i$ on the cartesian product $\prod_{i \in I} X_i$\\
endowed with the cartesian topology (\cite{Fol.}). If $I$ is finite we have the usual Fubini's theorem. If $I$ is infinite denumerable we have the following result of Jessen (\cite{Jes.}).

\begin{thm}
Under the above assumptions, let $I = \mathbb{N}$ and let $f \in L^1(\mu)$. Then there exists a measurable set $J \subseteq \prod_{i \in \mathbb{N}} X_i$ with $\mu (J) = 1$ and if for every $\zeta \in J$, $n \in \mathbb{N}$ and every $z \in \prod_{i \in \mathbb{N}} X_i$ we set $g_{\zeta ,n}(z) = f(w_{\zeta ,n}(z))$, where $[w_{\zeta, n}(z)]_m = z_m$ for $m \leq n$ and $[w_{\zeta, n}(z)]_m = \zeta_m$ for $m > n$, then we have 

$$\int g_{\zeta ,n}(z) \mathrm{d} \mu(z) \to \int f(z) \mathrm{d}\mu(z)$$ for all $\zeta \in J$.

\end{thm}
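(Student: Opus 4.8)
The plan is to recognise the integrals $\int g_{\zeta,n}\,d\mu$ as a reverse (backward) martingale in the tail coordinates and to invoke the reverse martingale convergence theorem together with Kolmogorov's zero-one law. The statement is exactly Lévy's downward convergence theorem specialised to the product structure, so the conceptual work is in setting up the right filtration.

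First I would simplify the left-hand side. Since $g_{\zeta,n}(z)=f(w_{\zeta,n}(z))$ depends on $z$ only through the first $n$ coordinates $z_1,\dots,z_n$, the ordinary finite Fubini theorem gives
\[
\Phi_n(\zeta):=\int g_{\zeta,n}(z)\,d\mu(z)=\int_{X_1\times\cdots\times X_n} f(z_1,\dots,z_n,\zeta_{n+1},\zeta_{n+2},\dots)\,d\mu_1(z_1)\cdots d\mu_n(z_n),
\]
so that $\Phi_n$ is a measurable function on $\prod_{i\in\mathbb{N}}X_i$ that depends only on the tail coordinates $(\zeta_m)_{m>n}$.

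Let $\mathcal{T}_n$ denote the $\sigma$-algebra generated by the coordinate projections $z\mapsto z_m$ for $m>n$; the sequence $(\mathcal{T}_n)_{n\ge1}$ is decreasing. I would then check that $\Phi_n=\mathbb{E}[f\mid\mathcal{T}_n]$, the conditional expectation of $f$ with respect to $\mu$: since the coordinates are $\mu$-independent, integrating out $z_1,\dots,z_n$ against $\mu_1\times\cdots\times\mu_n$ is precisely conditioning on the coordinates from $n+1$ onwards. By the tower property this makes $(\Phi_n,\mathcal{T}_n)$ a reverse martingale. Applying the reverse martingale convergence theorem, for $f\in L^1(\mu)$ one gets $\Phi_n\to\mathbb{E}[f\mid\mathcal{T}_\infty]$ both $\mu$-almost everywhere and in $L^1(\mu)$, where $\mathcal{T}_\infty=\bigcap_n\mathcal{T}_n$ is the tail $\sigma$-algebra. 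Finally, Kolmogorov's zero-one law for the product measure $\mu$ shows that $\mathcal{T}_\infty$ is $\mu$-trivial, whence $\mathbb{E}[f\mid\mathcal{T}_\infty]=\int f\,d\mu$ almost everywhere. Taking $J$ to be the full-measure set of $\zeta$ on which $\Phi_n(\zeta)\to\int f\,d\mu$ completes the argument.

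The main obstacle is the almost-everywhere (as opposed to merely $L^1$) convergence. If one prefers a self-contained route that avoids quoting the general martingale theorems, I would instead first verify the statement for $f$ depending on only finitely many coordinates — there $\Phi_n$ is eventually constant and equal to $\int f\,d\mu$, so the conclusion is immediate — then use the density of such functions in $L^1(\mu)$, and finally upgrade the resulting $L^1$ convergence to pointwise a.e. convergence by a Doob-type maximal inequality for the reverse martingale $(\Phi_n)$. Establishing that maximal inequality (equivalently, proving the a.e. convergence directly) is the technical heart of the proof; the Fubini reduction and the identification of the tail limit via the zero-one law are routine.
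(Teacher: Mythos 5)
Your proposal is correct, and in fact it is a complete proof modulo standard facts, whereas the paper itself gives no proof of this statement at all: Theorem 2.11 is quoted as a known result of Jessen, with a citation to his 1934 Acta Mathematica paper. So there is no in-paper argument to compare against; what you have reconstructed is the standard modern proof of Jessen's theorem, which historically predates martingale theory and was originally proved by hand. Your key identification is right: since $\mu$ is a product measure, the coordinates are independent, and $\Phi_n(\zeta)=\int g_{\zeta,n}\,d\mu$ is a version of $\mathbb{E}[f\mid\mathcal{T}_n]$ for the decreasing filtration $\mathcal{T}_n=\sigma(z_m:m>n)$; L\'evy's downward theorem then gives a.e. and $L^1$ convergence to $\mathbb{E}[f\mid\mathcal{T}_\infty]$, and Kolmogorov's zero-one law identifies the limit as the constant $\int f\,d\mu$. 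Two small points of care: first, for fixed $n$ the section $(z_1,\dots,z_n)\mapsto f(z_1,\dots,z_n,\zeta_{n+1},\dots)$ is only integrable for $\zeta$ outside a $\mu$-null set, so $J$ should be obtained by intersecting, over $n\in\mathbb{N}$, the full-measure sets where all $\Phi_n$ are defined with the set where the convergence holds; second, since the factors $X_i$ are merely compact (not assumed metrizable), the Borel $\sigma$-algebra of the product may exceed the product $\sigma$-algebra, so if one works with the regular Borel product measure of \cite{Fol.} one should either restrict to the product $\sigma$-algebra or note that continuous functions (hence, by Stone--Weierstrass, uniform limits of finitely-dependent functions) are dense in $L^1(\mu)$, which reconciles the two settings. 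Your fallback route (finitely-dependent $f$ first, then $L^1$-density plus a maximal inequality) also works, but it is more labor than needed: backward martingale a.e. convergence is automatic, with no uniform-integrability hypothesis, via the upcrossing inequality, so the ``technical heart'' you flag is already packaged in the reverse martingale convergence theorem you quote in your main argument.
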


We will apply these results in the case where $X_i = T$ is the unit circle and $\mu_i$ is the normalised Lebesgue measure $\frac{\partial \theta}{2\pi}$ on $T$.

Extensions of Theorem 2.11 have been obtained by D.Maharam (\cite{Mah.}) in the case where $I$ is uncountable (and well ordered). However, we do not know if such results hold when the limit is taken with respect to the directed set of all finite subsets of $I$ with the order of the relation of inclusion. We can prove the following:

''Let $\{X_i\}_{i \in I}$,$\{\mu_i\}_{i \in I}$ be as above, where $I$ is an infinite set. Let $f: \prod_{i \in I} X_i \to \mathbb{R}$ be a continuous function. For every $\zeta \in \prod_{i \in I}X_i$ $(\zeta = (\zeta_i)_{i \in I})$ and every finite set $F \subseteq I$ we set $g_{\zeta, F}(z) = f(w_{\zeta, F}(z))$ where $[w_{\zeta, F}(z)]_i = z_i$ if $i \in F$ and $[w_{\zeta, F}(z)]_i = \zeta_i$ if $i \in I \setminus F$. Let $\varepsilon >0$. Then there exists a finite set $F_{\varepsilon} \subseteq I$, such that for every finite set $F$, with $F_{\varepsilon} \subseteq F \subseteq I$ and every $\zeta \in \prod_{i \in I} X_i$ we have $|\int g_{\zeta, F}(z) \mathrm{d} \mu(z) - \int f(z) \mathrm{d} \mu(z)| < \varepsilon$.''

We do not know if the previous result remains valid if $f$ is the characteristic function $f = \chi_K$ of any compact subset $K \subseteq \prod_{i \in I} X_i$, or more generally when $f \in L^1(\mu)$. Certainly for such an extension one would require that the result holds for almost all $\zeta \in \prod_{i \in I} X_i$ and that the set $F_{\varepsilon}$ depends on $\zeta$ as well. This is still open, even if $I = \mathbb{N}$.

If we require the weaker result that for almost for all $\zeta \in \prod_{i \in I} X_i$ and for every $\varepsilon > 0$ and every finite set $F \subseteq I$ there exists a finite set $F'$ with $F \subseteq F \subseteq 'I$ such that  $|\int g_{\zeta, F'}(z) \mathrm{d} \mu(z) - \int f(z) \mathrm{d} \mu(z)| < \varepsilon$, then the situation is the following. For $I$ countable this is true, as it follows from Theorem 2.11. In the uncountable case we do not know the answer.

\section{Sets of uniqueness for the algebra of a polydisc}

In this section we consider the compact space $\overline{D}^I$, where $\overline{D} = \{z \in \mathbb{C} : |z| \leq 1 \}$ and $I$ any non-empty set endowed with the cartesian topology. A polynomial on $\overline{D}^I$ is a finite sum of monomials 
$ P(z) = \sum_{a \in F} c_a z_{i_{1} (a)}^{k_{1} (a)} \cdots z_{i_{M(a)} (a)}^{k_{M(a)} (a)} $, where $F$ is a finite set, $M(a) \in \mathbb{N}$ for any $a \in F$, $i_j (a) \in I$ and $k_j (a) \in \mathbb{N}$ for all $j = 1, \dots , M(a), a \in F, c_a \in \mathbb{C}$ for all $a \in F$ and $|z_{i_j (a)}| \leq 1$ for all $j = 1, \dots , M(a), \; a \in F$ and $z = (z_i)_{i \in I}$, $|z_i| \leq 1$ for all $i \in I$.

Therefore, $P$ depends on a finite number of variables even if the set $I$ is infinite. We investigate all possible uniform limits of sequences of polynomials $P_n$ on $\overline{D}^I$ with respect to the usual  metric on $\mathbb{C}$, where the degree of each $P_n$ and the set of variables on which each $P_n$ depends may vary. 

\begin{prop}
Let $f:\overline{D}^I \to \mathbb{C}$ be a function. Then there exists a sequence $\{p_n\}_{n \geq 1}$ of polynomials converging uniformly towards $f$ on $\overline{D}^I$ with respect to the usual Euclidean metric of $\mathbb{C}$, if and only if $a)$ and $b)$ below are satisfied, where 
\begin{enumerate}[a)]
\item $f$ is continuous on $\overline{D}^I$ and
\item for every $i_0 \in I$ and every $z=(z_i)_{i \in I} \in  \overline{D}^I$ the function $\overline{D} \in w \to f(\zeta(w)) \in \mathbb{C}$ belongs to $A(\overline{D})$, where $\zeta_{i_0}(w)=w$ and $\zeta_{i}(w)=z_i$ for all $i \in I \setminus \{ i_0 \}$.
\end{enumerate}

\end{prop}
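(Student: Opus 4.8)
The plan is to prove both implications, with the forward (necessity) direction being routine and the reverse (sufficiency) direction carrying the real content.

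For necessity, suppose $p_n \to f$ uniformly on $\overline{D}^I$. Since each $p_n$ is continuous and a uniform limit of continuous functions is continuous, $f$ is continuous, which gives $a)$. For $b)$ I would fix $i_0 \in I$ and a point $z \in \overline{D}^I$ and look at the one-variable slice $w \mapsto p_n(\zeta(w))$: freezing every coordinate except $i_0$ turns each polynomial $p_n$ into an ordinary polynomial in the single variable $w$, and these converge uniformly on $\overline{D}$ to $w \mapsto f(\zeta(w))$. Hence this slice of $f$ is a uniform limit of one-variable polynomials on $\overline{D}$, so it belongs to $A(\overline{D})$.

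For sufficiency, the strategy is to reduce to finitely many variables and then invoke Hartogs's theorem together with a dilation argument. When $I$ is finite the reduction is vacuous; when $I$ is infinite, given $\varepsilon > 0$ I would first apply Proposition 2.9 to obtain a finite set $F \subseteq I$ and, fixing any reference point $\zeta$, replace $f$ by the function $g$ that freezes the coordinates outside $F$ to the values $\zeta_i$, with $\sup_{z}|f(z) - g(z)| < \varepsilon/2$. Since $g$ depends only on the coordinates in $F$, it descends to a continuous function $h$ on the finite polydisc $\overline{D}^F$. Property $b)$ guarantees that $h$ is separately in $A(\overline{D})$ in each of its finitely many variables, so in particular $h$ is separately holomorphic on the open polydisc $D^F$; by Hartogs's theorem $h$ is then jointly holomorphic on $D^F$ and expands in a power series with center $0$ converging uniformly on every closed polydisc contained in $D^F$.

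The remaining and most delicate step is to upgrade uniform approximation from compact subsets of $D^F$ to the full closed polydisc $\overline{D}^F$. Here I would use the dilation $h_r(z) = h(rz)$ for $0 < r < 1$: uniform continuity of $h$ on the compact set $\overline{D}^F$ forces $h_r \to h$ uniformly as $r \to 1^-$, while for each fixed $r$ the set $r\overline{D}^F$ is a closed polydisc contained in $D^F$, so the partial sums of the power series of $h$ converge uniformly to $h_r$ on $\overline{D}^F$ and these partial sums are genuine polynomials. Choosing $r$ close to $1$ and then a sufficiently high partial sum yields a polynomial within $\varepsilon/2$ of $h$, hence within $\varepsilon$ of $f$ once lifted back to $\overline{D}^I$ as a polynomial in the variables of $F$; taking $\varepsilon = 1/n$ produces the desired sequence. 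I expect the main obstacle to be exactly this passage to the closed polydisc, since Hartogs alone delivers only interior convergence, and it is the boundary continuity hypothesis $a)$, exploited through the dilation, that bridges the gap.
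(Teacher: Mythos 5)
Your proposal is correct and follows essentially the same route as the paper: the necessity direction by slicing the polynomials, and the sufficiency direction by reducing the infinite case to finitely many variables via Proposition 2.9, then combining Hartogs's theorem (power series at $0$ converging uniformly on closed polydiscs $r\overline{D}^F$, $r<1$) with the dilation $z \mapsto rz$ and uniform continuity on the compact polydisc. Your $h_r$ formulation is just a rephrasing of the paper's choice $p(z) = Q(rz)$ with $|f(z)-f(rz)| < \varepsilon/2$, so there is nothing substantive to add.
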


\begin{proof}
It is immediate that if $p_n \to f$, then $f$ satisfies $a)$ and $b)$.

Suppose that $f$ satisfies $a)$ and $b)$ and let $\varepsilon > 0 $ be given. It suffices to find a polynomial $p$ on $ \overline{D}^I$, so that $|f(z) - p(z)| < \varepsilon$ for all $z \in  \overline{D}^I$.

We consider first the case where $I$ is finite; without loss of generality let $I = \{ 1, \dots , N \}$ and $f: \overline{D}^N \to \mathbb{C}$ satisfies $a)$ and $b)$. Since $\overline{D}^N$ is a compact metric space, it follows that $f$ is uniformly continuous. Therefore, there exists $ r \in (0,1)$, such that $|f(z) - f(rz)| < \frac{\varepsilon}{2}$ for all $z \in \overline{D}^N$. By $b)$ combined with Hartogs's theorem \cite{Nar.}, the function $f$ has a local power series expansion with center $0 = (0, \dots , 0)$. The convergence towards $f$ is absolute and uniform on any closed polydisc with center $0$ which is contained in the open domain of definition of the holomorphic function $f$. In particular, $a$ partial sum $Q$ of this extension satisfies  $|Q(w) - f(w)| < \frac{\varepsilon}{2}$ for all $w=(w_1, \dots, w_N)$ with $|w_j| \leq r$, because $r<1$. It follows easily that $|p(z) - f(z)|< \varepsilon $ for all $z \in \overline{D}^N$, where $p(z)$ is the polynomial $p(z) = Q(rz)$. This finishes the proof in the particular case where $I$ is finite.

Now assume that $I$ is infinite, that $f$ is defined on $\overline{D}^N$ and satisfies $a)$ and $b)$. Let $\varepsilon > 0$. It suffices to find a polynomial $p$ satisfying $|p(z) - f(z)| < \varepsilon$ for all $z \in \overline{D}^N$. By Proposition 2.9 we can find a point $\zeta \in \overline{D}^N$ and a finite set $F \subset I$ such that, the function $g: \overline{D}^I \to \mathbb{C}$, defined by $g(z) = f(w(z))$, where $w(z)_i = {\zeta}_i$ if $i \in I \setminus F $ and $w(z)_i = z_i$ for $i \in F$, satisfies $|g(z) - f(z)| < \frac{\varepsilon}{2}$ for all $z \in  \overline{D}^I$. The function $g$ depends on a finite number of variables and defines a function on $\overline{D}^F$ satisfying $a)$ and $b)$. By the previous case we find a polynomial $p$ (on $\overline{D}^F$, which defines also polynomial on $\overline{D}^I$) such that $|p(z) - g(z)| < \varepsilon$ on $\overline{D}^I$ and the proof is complete.
\end{proof}
 
Myrto Manolaki asked if condition $b)$ of Proposition 3.1 implies condition $a)$ of Proposition 3.1. Paul Gauthier gave a negative answer in the harmonic case. The following counterexample on $\overline{D}^2$ for the holomorphic case was suggested by Greg Knese; considered the rational inner function $f$ on the bidisc, where $f(z,w) = \frac{2zw - z - w}{2-z-w}$ for $|z| \leq 1$, $|w| \leq 1$, $(z,w) \neq (1,1)$ and $f(1,1) = -1$. This function satisfies $b)$ of Proposition 3.1 but not $a)$; because for $z=\overline{w}= e^{i \theta}, \theta \in \mathbb{R}$ $f(e^{i \theta}, e^{-i \theta}) = 1 \neq -1 = f(1,1)$ and the function f is not continuous at $(1,1)$ seeing as as function defined on $\overline{D}^2$.

\begin{defn}
$A (\overline{D}^I)$ contains exactly all functions $ f: \overline{D}^I \to \mathbb{C}$ which satisfy $a)$ and $b)$ of Proposition 3.1.
\end{defn}

It follows easily that $A(\overline{D}^I)$ is a Banach algebra and contains exactly all uniform limits of polynomials on $\overline{D}^I$ with respect to the usual Euclidean metric on $\mathbb{C}$. By the maximum principle, it can easily be seen that $T^I$ where $T = \partial \overline{D} = \{ z \in \mathbb{C} : |z| = 1 \}$ is a boundary for $A(\overline{D}^I)$; that is, for every $f \in A(\overline{D}^I)$ there exists a point $\zeta \in T^I$ such that $|f(\zeta)| \geq |f(z)|$ for all $z \in \overline{D}^I$. Equivalently the restriction operator $\pi _{T^I} : A(\overline{D}^I) \ni f \to f_{|T^I} \in C( T^I)$ is an isometry.

\begin{defn} A compact set $K \subset T^I$ is called a set of uniqueness for the class $A(\overline{D}^I)$, if the operator $\pi _{K} : A(\overline{D}^I) \ni f \to f_{|K} \in C (K)$ is injective. Equivalently if $f,g \in A(\overline{D}^I)$ satisfy $f_{|K} = g_{|K}$, then $f \equiv g$ on $\overline{D}^I$. 
\end{defn}

\begin{rem} 
A compact set $K \subset T^I$ is called a peak-set (for $A(\overline{D}^I)$), if there exists a function $\phi \in A(\overline{D}^I)$ satisfying $\phi _{|K} \equiv 1$ and $| \phi (z)| <1$ for all $z \in \overline{D}^I \ K$ (see \cite{Rud2.}).

%% poy teleiwnei to remark arage??

One can easily see that, if a compact set $K \subset T^I$ is a peak-set, then it is not a set of uniqueness for $A(\overline{D}^I)$. For $I = \{ 1,2 \}$ the set $K = \{ 1 \} \times T \subset T^2 \subset  \overline{D}^2$ is neither a peak-set nor a set of uniqueness for $A(\overline{D}^I)$. Considering the function $ f(z_1, z_2) = \frac{1+z_1}{2}$ we see that $f_{|K} \equiv 1$ but $f \not\equiv 1$; this yields that $K$ is not a set of uniqueness for $A(\overline{D}^2)$. Furthermore, if $g \in A(\overline D^2)$ satisfies $g_{|K} \equiv 1$ it follows easily that $g(1,0) = 1$; thus, K is neither a peak-set. The previous remarks can easily  be extended to the case of any arbitrary set $I$ containing at least two points. 
\end{rem}

\begin{thm}
Let $K \subset T^I$ be a compact set with $\lambda (K) >0 $, where $\lambda$ denotes the product measure on $T^I$ of the normalised Lebesgue measure $\frac{d \theta}{2 \pi}$ on each factor $T$ of $T^I$. Then $K$ is a set of uniqueness for $A(\overline{D}^I)$.
\end{thm}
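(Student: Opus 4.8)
The plan is to reduce everything to the one–variable Privalov theorem (Theorem 2.1) by a slicing argument, treating three cases in increasing generality: $I$ finite, $I$ countably infinite, and $I$ uncountable, each reducing to the one before. First set $h=f-g$; since $A(\overline{D}^I)$ is a linear space, $h\in A(\overline{D}^I)$ and $h|_K=0$, so it suffices to prove $h\equiv 0$. Throughout I will use the stronger formulation that $h$ vanishing on a \emph{measurable} $J\subseteq T^{(\cdot)}$ of positive product measure forces $h\equiv 0$, which is convenient because the slices produced below need not be compact (inner regularity lets one pass to compact subsets wherever that is preferred).

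For $I=\{1,\dots,N\}$ finite I would argue by induction on $N$. The base $N=1$ is exactly Theorem 2.1. For the step, write a point of $\overline{D}^N$ as $(z',w)$ with $z'\in\overline{D}^{N-1}$ and $w\in\overline{D}$, and apply Fubini's theorem to $\chi_J$ to get $\lambda(J)=\int_{T^{N-1}}\lambda_1(J_{z'})\,d\lambda_{N-1}(z')>0$, where $J_{z'}$ is the slice. Hence $E=\{z':\lambda_1(J_{z'})>0\}$ has positive measure. For each $z'\in E$ the function $w\mapsto h(z',w)$ lies in $A(\overline{D})$ by condition (b) of Proposition 3.1 and vanishes on the positive–measure set $J_{z'}$, so Theorem 2.1 gives $h(z',\cdot)\equiv 0$. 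Fixing any $w_0\in\overline{D}$, the function $z'\mapsto h(z',w_0)$ lies in $A(\overline{D}^{N-1})$ and vanishes on $E$; the inductive hypothesis forces $h(\cdot,w_0)\equiv 0$, and letting $w_0$ range over $\overline{D}$ yields $h\equiv 0$.

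For $I=\mathbb{N}$ the engine is Jessen's theorem (Theorem 2.11) applied to $\chi_K\in L^1(\mu)$. It supplies a full–measure set of tails; fix one such $\zeta$. Writing $K^{\zeta,n}=\{(z_1,\dots,z_n)\in T^n:(z_1,\dots,z_n,\zeta_{n+1},\zeta_{n+2},\dots)\in K\}$, Theorem 2.11 gives $\lambda_n(K^{\zeta,n})=\int g_{\zeta,n}\,d\mu\to\lambda(K)>0$, so $\lambda_n(K^{\zeta,n})>0$ for all large $n$. For each such $n$, the function $(z_1,\dots,z_n)\mapsto h(z_1,\dots,z_n,\zeta_{n+1},\dots)$ belongs to $A(\overline{D}^n)$ (continuity plus condition (b)) and vanishes on $K^{\zeta,n}$, so by the finite case it vanishes on all of $\overline{D}^n$. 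Thus $h$ vanishes on $\overline{D}^{\{1,\dots,n\}}\times\{(\zeta_{n+1},\zeta_{n+2},\dots)\}$ for every large $n$; these slices are increasing and their union is dense in $\overline{D}^{\mathbb{N}}$ by the density remark of Section 2, so continuity of $h$ gives $h\equiv 0$. For $I$ uncountable, Corollary 2.10 shows $h$ depends on a countable coordinate set $C\subseteq I$, so $h=\tilde h\circ\pi_C$ with $\tilde h\in A(\overline{D}^C)$. Factoring $T^I=T^C\times T^{I\setminus C}$ and $\lambda=\lambda_C\otimes\lambda_{I\setminus C}$, Fubini produces $y_0\in T^{I\setminus C}$ with $\lambda_C(K_{y_0})>0$ for the slice $K_{y_0}=\{x\in T^C:(x,y_0)\in K\}$; since $\tilde h$ vanishes on $K_{y_0}$, the countable case gives $\tilde h\equiv 0$, hence $h\equiv 0$.

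The finite induction is routine (Fubini plus Privalov), and the uncountable case reduces cleanly to the countable one once Corollary 2.10 is invoked. The crux is therefore the countable case, where two points need care: extracting from Jessen's theorem a \emph{single} tail $\zeta$ along which all sufficiently high–dimensional slices have positive measure, and then upgrading ``$h$ vanishes on every finite slice through $\zeta$'' to ``$h$ vanishes everywhere'' by density together with continuity. The measurability of the various slices, which the slicing arguments tacitly require, is guaranteed precisely on the full–measure sets handed to us by Fubini's theorem and by Theorem 2.11.
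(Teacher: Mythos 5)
Your proposal is correct, and in the finite and countable cases it is essentially the paper's own proof: the same Fubini-plus-Privalov induction for $I=\{1,\dots,N\}$ (the paper slices off the first coordinate rather than the last, and passes to a compact $R\subset J$ of positive measure by inner regularity --- exactly the device you invoke in order to run the induction with merely measurable slices), and the same use of Jessen's theorem to extract a single tail $\zeta$ with $\lambda_n(K^{\zeta,n})\to\lambda(K)>0$, followed by vanishing on the finite-dimensional slices through $\zeta$, density of their union, and continuity. The only genuine divergence is the uncountable case. The paper avoids Fubini there: with $F$ a countable set of coordinates carrying $f$ and $g$, it takes $S$ to be the \emph{projection} of $K$ onto $T^F$ (compact, and $f=g$ on $S$ since the functions ignore the remaining coordinates) and obtains $\lambda_F(S)>0$ by pure monotonicity, from $K\subseteq S\times T^{I\setminus F}$ and $\lambda_{I\setminus F}(T^{I\setminus F})=1$. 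You instead apply Fubini to the factorization $T^I=T^C\times T^{I\setminus C}$ to find a slice $K_{y_0}$ of positive $\lambda_C$-measure. This works, but it is the one step where you owe a justification you do not supply: for an uncountable product the Borel $\sigma$-algebra of $T^{I\setminus C}$ is strictly larger than the product $\sigma$-algebra, so the textbook Fubini theorem does not apply verbatim to $\chi_K$. One must either invoke the Radon-product version of Fubini --- which does go through here, since $\chi_K$ is upper semicontinuous for compact $K$, hence $y\mapsto\lambda_C(K_y)$ is an infimum of continuous functions and $\lambda(K)=\int\lambda_C(K_y)\,d\lambda_{I\setminus C}(y)$ follows by monotone convergence --- or simply switch to the paper's projection trick, which buys the same conclusion with no measurability overhead. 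Note also that your slice $K_{y_0}$ is automatically compact (a slice of a compact set), so the countable case applies to it directly, without even needing your strengthened measurable-set formulation at that point.
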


\begin{proof}
If $I$ is a singleton, then the result follows from the Theorem of Privalov (Theorem 2.1, Theorem 2.3). If $I$ is finite, then the result follows by induction combined with Fubini's Theorem and the theorem of Privalov. Indeed, if $K \subset T^n$ has positive measure (of dimension n), then, by Fubini's Theorem there exists $J \subset T^{n-1}$ with positive measure (of dimension $n-1$) such that, for every $y \in J$ the set $\{ x \in T : (x,y) \in K \} = A^y$ has positive measure (of dimension 1).

If $f,g \in A(\overline{D}^n)$ satisfies $f_{|K} = g_{|K}$, then $f_{|A^y \times \{y\}} \equiv g_{|A^y \times \{y\}}$ for all $y \in J$. Since $A^y$ has positive measure , Privalov's theorem yields $f_{|\overline{D} \times \{y\}} \equiv g_{|\overline{D} \times \{y\}}$ and this for all $y \in J$. We find a compact set $R \subset J$ with positive measure. By the induction hypothesis $R$ is a set of uniqueness ; therefore $f_{| \{ z \} \times \overline{D}^{n-1} } \equiv g_{| \{ z \} \times \overline{D}^{n-1} }$ and this for all $z \in \overline{D}$. Therefore $f \equiv g$. This completes the proof in the case where $I$ is finite. 

Next assume that $I$ is infinite denumerable; without loss of generality $I= \mathbb{N}$. Let $K \subset T^I$ be a compact set with positive measure. Since the product measure is a regular Borel measure (\cite{Fol.}), the extension of Fubini's  theorem proved by Jessen (Theorem 2.11) applies to the characteristic function $\chi_K$, which is measurable and integrable. Therefore, there exists a measurable set $J \subset T^I$ of full measure, such that for every $\zeta \in J$, $\zeta=(\zeta_1, \zeta_2, \dots)$ the sequence $\lambda _n (E_n)$, $n=1,2, \dots$ with 
$E_n = \{ (z_1, \dots , z_n) \in T^n : (z_1, \dots , z_n, \zeta_{n+1}, \zeta_{n+2}, \dots) \in K \}$ converges to $\lambda (K)> 0$ (where $\lambda$ is the product measure on $T^N$ of $\frac{d \theta}{2 \pi}$ on each factor $T$ and $ \lambda _n$ is the product measure on $T^n$ ).Fix a $\zeta \in J$. Since there exists $n_o$, such that for all $ n \geq n_0$ the sets $E_n$ have positive measure our result in the finite case implies that each $E_n \subset T^n$ is a set of uniqueness for $A(\overline{D}^n)$, $n \geq n_0$. Let $f,g \in A(\overline{D}^{\mathbb{N}})$ satisfy $f_{|K} = g_{|K}$ then, the functions $f_n ,g_n \in A(\overline{D}^n)$ defined by $f_n(z_1, \dots , z_n) = f_n(z_1, \dots , z_n, \zeta_{n+1}, \zeta_{n+2}, \dots)$ and $g_n(z_1, \dots , z_n) = g_n(z_1, \dots , z_n, \zeta_{n+1}, \zeta_{n+2}, \dots)$ coincide on $E_n$. Since $E_n$ is a set of uniqueness on $A(\overline{D}^n)$, it follows $f_n = g_n$ for each $n$ and $f=g$ on the set $\displaystyle \bigcup_n A(\overline{D}^n) \times \{ \zeta_{n+1} \} \times \{ \zeta_{n+2} \} \times \cdots $ which is dense in $A(\overline{D}^n)$ for the cartesian topology. As $f$ and $g$ are continuous, it follows that $f \equiv g$. This completes the proof in the infinite denumerable case. 

Assume now that $I$ is infinite non-denumerable. It is well known that every continuous function $f$ on $A(\overline{D}^I)$ depends on a denumerable set of variables. Let $K \subset T^I$ be compact with positive measure $\lambda (K) >0$. Let $f,g \in A(\overline{D}^I)$ be such that $f_{|K} = g_{|K}$. We have to show that $f \equiv g$.\\
If $F \subset I$ is a denumerable set, such that $f$ and $g$ depend only on the coordinates in $F$, then $f$ and $g$ may be seeing as functions in $A(\overline{D}^F)$ and $f_{|S} = g_{|S}$ where $S \subset T^F$ is the projection of $K$ on $A(\overline{D}^F)$ which is a compact set. By the definition of the product measure we have $\lambda _F (S) \geq \lambda _I (K) >0$, because $K \subset S \times T^{I-F}$ and $T^{I-F}$ has measure $1$, where $\lambda _F$ and $\lambda _I$ are the product measures on $T^F$ and $T^I$, respectively.  Therefore, $\lambda _F (S) >0$.
By our result in the denumerable case it follows $f=g$ as element of $A(\overline{D}^F)$, which easily implies $f=g$ on $A(\overline{D}^I)$, since $f$ and $g$ depend only on the coordinates in $F$. This completes the proof.
\end{proof}

\begin{rem} In the case where $I$ is non-denumerable it can easily be seen that a compact set $K \subset T^I$ is a set of uniqueness for $A(\overline{D}^I)$ if only if, for every infinite denumerable set $F \subset I$ the projection of $K$ on $T^F$ is a set of uniqueness for $A(\overline{D}^F)$.
\end{rem}

\begin{exmp} If the set I contains at least two points, then there exists a compact set $K \subset T^I$ which is a set of uniqueness for $A(\overline{D}^I)$ and has zero measure.
\end{exmp}

We sketch such an example in $A(\overline{D}^2)$, but it can easily be transferred to the generic case, when $I$ contains more than one points. 

Let $J = [0,1]$, $J \cap \mathbb{Q}=\{ q_1, q_2, \dots \} $ and $a_n = \frac{1}{n}$, $n=1,2, \dots $. We set $A_n = \{ (e^{iq_n}, e^{iy}) : 0 \leq y \leq a_n \} \subset T^2 $ and $K = (\bigcup _{n=1}^{\infty} A_n) \cup \{ (e^{ix}, e^{i \cdot 0}) : x \in J \} \subset T^2$.
One can easily check that $K$ is compact with zero measure (of dimension 2). Let $f, g \in A \big( \overline{D}^2 \big)$ be such that $f_{|K} = g_{|K}$. Since the one-dimensional measure of $\{ e^{iy} : (e^{iq_n}, e^{iy}) \in A_n \} $ is $a_n = \frac{1}{n} >0 $ it follows by Privalov's theorem (Theorem 2.1, Theorem 2.3) that $f(e^{iq_n} , w) = f(e^{iq_n} , w)$ for all $w \in A(\overline{D})$. By the continuity of $f$ and $g$ and because $J \cap Q$ is dense on $J = [0,1]$, it follows that $f_{|S} = g_{|S}$ where $S = \{ (e^{ix}, e^{iy}) : 0 \leq x \leq y \; and \; 0 \leq y \leq 2\pi \}$. Since the 2-dimensional measure of $S$ is strictly positive, it follows by Theorem 3.5. that $S$ is a set of uniqueness for $A(\overline{D}^2)$. Therefore $f \equiv g$. The proof of Example 3.7 is complete.

\begin{prop} Let $I \neq \emptyset$ and $\big( I_j \big)_{j \in J}$ a partition of $I$. For every $j \in J$ let $K_j \subset T^{I_j}$ be a compact set of uniqueness for $A(\overline{D}^{I_j})$. Then the compact set $K = \prod_{j \in J} K_j \subset \prod_{j \in J} T^{I_j} \equiv T^I$ is a set of uniqueness for $A(\overline{D}^I)$.
\end{prop}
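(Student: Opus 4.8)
The plan is to treat the cases of $J$ finite, denumerable, and uncountable in turn, mirroring the structure of the proof of Theorem 3.5, with the two-block case serving as the engine. Throughout I write a point of $\overline{D}^I = \prod_{j \in J} \overline{D}^{I_j}$ as $z = (z^{(j)})_{j \in J}$ with $z^{(j)} \in \overline{D}^{I_j}$, and I record the elementary but crucial fact that restriction preserves the classes: if $h \in A(\overline{D}^I)$ and one freezes the coordinates outside a subset $I_0 \subseteq I$ at arbitrary values of $\overline{D}$, the resulting function of the coordinates in $I_0$ lies in $A(\overline{D}^{I_0})$. Indeed, continuity is preserved because the insertion map $\overline{D}^{I_0} \to \overline{D}^I$ is continuous for the cartesian topologies, and condition $b)$ of Proposition 3.1 survives since freezing extra coordinates only fixes more variables. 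Note also that each $K_j$ is necessarily nonempty, since otherwise $f|_{K_j} = g|_{K_j}$ would hold vacuously and force all functions to coincide; hence $K \neq \emptyset$.

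For $J = \{1, 2\}$, let $f, g \in A(\overline{D}^I)$ agree on $K = K_1 \times K_2$. Fixing $\eta \in K_2$, the functions $w \mapsto f(w, \eta)$ and $w \mapsto g(w, \eta)$ lie in $A(\overline{D}^{I_1})$ and agree on $K_1$; since $K_1$ is a set of uniqueness they coincide on all of $\overline{D}^{I_1}$. Thus $f(w, \eta) = g(w, \eta)$ for every $w \in \overline{D}^{I_1}$ and every $\eta \in K_2$. Now freezing $w \in \overline{D}^{I_1}$ and letting $\eta$ vary, the same argument applied to $K_2$ gives $f(w, \cdot) = g(w, \cdot)$ on $\overline{D}^{I_2}$, whence $f \equiv g$. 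The case of finite $J$ then follows by induction, grouping $I_1 \sqcup \cdots \sqcup I_{n-1}$ against $I_n$ and using that $K_1 \times \cdots \times K_{n-1}$ is a set of uniqueness for $A(\overline{D}^{I_1 \cup \cdots \cup I_{n-1}})$ by the inductive hypothesis.

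For $J = \mathbb{N}$, fix a base point $\zeta = (\zeta^{(j)})_{j} \in K$. For each $n$, freezing the tail coordinates at $\zeta$ produces $f_n, g_n \in A(\overline{D}^{I_1 \cup \cdots \cup I_n})$ which agree on $K_1 \times \cdots \times K_n$, because appending the tail $(\zeta^{(j)})_{j > n}$ lands back in $K$. By the finite case this product is a set of uniqueness, so $f_n \equiv g_n$; equivalently $f = g$ on the slice $\overline{D}^{I_1} \times \cdots \times \overline{D}^{I_n} \times \prod_{j > n} \{\zeta^{(j)}\}$. The union over $n$ of these slices is dense in $\overline{D}^I$ for the cartesian topology — any basic open set constrains only finitely many coordinates, hence only finitely many blocks, so it meets the slice once $n$ is large enough — and as $f, g$ are continuous it follows that $f \equiv g$.

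Finally, for uncountable $J$, I would reduce to the denumerable case rather than project the blocks down, which would run into the denumerability hypothesis of Remark 3.6. By Corollary 2.10 there is a countable $C \subseteq I$ on which both $f$ and $g$ depend; let $J' = \{j : I_j \cap C \neq \emptyset\}$, a countable set, and $I' = \bigcup_{j \in J'} I_j \supseteq C$. Since $f, g$ ignore the coordinates outside $I'$, one checks that $f|_K = g|_K$ forces agreement on $K' = \prod_{j \in J'} K_j$, by extending any point of $K'$ to a point of $K$ through inserting some $\zeta^{(j)} \in K_j$ for $j \notin J'$. Applying the denumerable case to the partition $(I_j)_{j \in J'}$ of $I'$ gives $f \equiv g$ on $\overline{D}^{I'}$, hence on $\overline{D}^I$. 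The main obstacle is precisely this last reduction: the tempting move is to shrink each block to $I_j \cap C$, but $K_j$ is only assumed to be a set of uniqueness for the full block $A(\overline{D}^{I_j})$, so it is cleaner to thin the index set $J$ to countably many blocks while keeping each block intact, leaving the density argument of the denumerable case as the genuine analytic content.
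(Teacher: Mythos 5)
Your proof is correct, and for finite and denumerable $J$ it is the paper's proof: the same two-block engine, the same induction via $(K_1 \times \cdots \times K_{n-1}) \times K_n$, and the same dense-slices-plus-continuity argument. The only genuine divergence is your treatment of uncountable $J$, and there the comparison is instructive: the paper does not treat that case separately at all. Having fixed $\zeta = (\zeta_j)_{j \in J} \in \prod_{j} K_j$, it indexes the slices by \emph{arbitrary} finite subsets $F \subseteq J$ rather than by initial segments of $\mathbb{N}$, concluding $f = g$ on
\[
B = \bigcup_{\substack{F \subseteq J \\ \text{finite}}} \overline{D}^{\bigcup_{j \in F} I_j} \times \prod_{j \in J \setminus F} \{\zeta_j\},
\]
which is dense in $\overline{D}^I$ for exactly the reason you give in your denumerable case (a basic open set constrains only finitely many coordinates, hence only finitely many blocks); no enumeration of $J$ is ever needed, so the ``main obstacle'' you identify dissolves once the slices are indexed by the directed family of finite subsets of $J$. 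Your detour through Corollary 2.10 is nevertheless valid, and you navigate its one trap correctly: thinning the index set to the countable $J' = \{j : I_j \cap C \neq \emptyset\}$ while keeping each block $I_j$ intact, rather than shrinking blocks to $I_j \cap C$, which would indeed lose the hypothesis that $K_j$ is a set of uniqueness for the full $A(\overline{D}^{I_j})$. Two small credits to your version over the paper's: you justify explicitly that freezing coordinates preserves membership in the classes (the paper uses this silently), and you observe that each $K_j$ is necessarily nonempty --- a fact both arguments need, to pick $\zeta$ at all and, in your reduction, to extend points of $K'$ to points of $K$. What the paper's formulation buys is uniformity (one argument for every infinite $J$, no appeal to Corollary 2.10); what yours buys is only these explicit justifications, at the cost of an extra case distinction.
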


\begin{proof}
Consider first the case where $J$ contain two points: $J = \{ 1,2 \} $. Let $f,g \in A(\overline{D}^I)$ : $I= I_1 \cup I_2$, $I_1 \cap I_2 = \emptyset$. We assume that $f_{|K_1 \times K_2} = g_{|K_1 \times K_2}$, where $K_1 \subset T^{I_1}$ and $K_2 \subset T^{I_2}$ are sets of uniqueness of $A(\overline{D}^{I_1})$ and $A(\overline{D}^{I_2})$ respectively. Let $\zeta\in K$, $\zeta = (\zeta_1, \zeta_2)$, $\zeta_1 \in K_1$ and $\zeta_2 \in K_2$ be fixed. Let the functions $f(\cdot , {\zeta}_2) \in A(\overline{D}^{I_1})$, $g(\cdot , \zeta_2) \in A(\overline{D}^{I_1})$ coincide on $K_1$. Since $K_1$ is a set of uniqueness for $ A(\overline{D}^{I_1})$ it follows $f(z,\zeta_2) = g(z, \zeta_2)$ for all $z \in  A(\overline{D}^I)$ and this for all ${\zeta}_2 \in K_2$. Since $K_2$ is a set of uniqueness for $ A (\overline{D}^{I_2})$ it follows easily $f(z,w) = g(z,w)$ for all $ (z,w) \in   \overline{D}^I  =   \overline{D}^{I_1}  \times   \overline{D}^{I_2} $. Thus, $K = K_1 \times K_2$ is a set of uniqueness for $ A(\overline{D}^{I_1 \cup I_2})$.

If $J$ is a finite set, the result follows easily by induction because $K_1 \times \cdots \times K_n = (K_1 \times \cdots \times K_{n-1}) \times K_n$. \\
Suppose $J$ is infinite. Let $f,g  \in A(\overline{D}^I)$ coincide on $K = \prod_{j \in J} K_j$, we will show $f \equiv g$. Fix an element $\zeta = \big( \zeta_j \big) _{j \in J} \in \prod_{j \in J} K_j$, $\zeta_j \in K_j$. By the result in the finite case we conclude that $f_{|B} = g_{|B}$ where $$B = 
\bigcup_{\substack{
   F \subset J \\
   \text{finite set}
  }}
\overline{D}^{ \bigcup_{j \in F} I_j} \times \prod_{j \in J \setminus F} \{ \zeta_j \}
.$$ Since every finite subset of $I = \cup_{j \in J} I_j$ is contained in $\bigcup_{j \in F} I_j$ for some finite subset $F \subset J$, it follows that $B$ is dense in $\overline{D}^I$. By the continuity of $f$ and $g$, we conclude $ f \equiv g$. The proof is complete.
\end{proof}

\section{Spherical approximation on polydiscs}

In this section we consider, as in the previous section, product spaces $\overline{D}^I$ and we investigate the uniform limits of polynomials on $\overline{D}^I$ with respect to the chordal metric $\chi$ on $\mathbb{C} \cup \{\infty\}$.

\begin{prop}
Let $f: \overline{D}^I \to \mathbb{C} \cup \{\infty\}$ be a function. Then, there is a sequence of polynomials $\{p_n\}_{n \geq 1}$ such that 

\[
\sup \{\chi(p_n(z),f(z)) : z \in \overline{D}^I \}
 \xrightarrow{\text{$n \to  + \infty$}} 0
\]
if and only if a') and b') below are satisfied, where:
\begin{enumerate}[a')]
\item the function $f$ is continuous

\item for every $j_0 \in I$ and every $z = (z_i)_{i \in I} \in \overline{D}^I$, the function $ \overline{D} \ni w \to f(\zeta(w)) \in \mathbb{C} \cup \{\infty\}$   belongs to $\tilde A(\overline{D})$, where $[\zeta(w)]_{j_o} = w$ and $[\zeta(w)]_{j} = z_j$ for all $j \in I \setminus \{j_o\}$.
\end{enumerate}

\end{prop}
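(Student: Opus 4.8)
The plan is to follow the architecture of the proof of Proposition 3.1, replacing the Euclidean metric by $\chi$ and the one-variable class $A(\overline D)$ by $\tilde A(\overline D)$, and to invoke Theorem 2.5 at every point where the value $\infty$ intervenes. For the necessity of a') and b'), suppose $\sup_{z \in \overline D^I}\chi(p_n(z),f(z)) \to 0$. Each $p_n$ is continuous as a map $\overline D^I \to \mathbb C \subseteq \mathbb C \cup \{\infty\}$, and a uniform (in $\chi$) limit of continuous functions into the compact metric space $(\mathbb C \cup \{\infty\},\chi)$ is continuous; this yields a'). For b'), fix $j_0 \in I$ and $z \in \overline D^I$ and freeze every coordinate except the $j_0$-th according to $z$, so that $w \mapsto p_n(\zeta(w))$ is a one-variable polynomial and $\sup_{|w|\le 1}\chi(p_n(\zeta(w)),f(\zeta(w))) \le \sup_{z'}\chi(p_n(z'),f(z')) \to 0$. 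Theorem 2.5 then gives $w \mapsto f(\zeta(w)) \in \tilde A(\overline D)$.

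For the sufficiency I would first reduce to finitely many variables exactly as in Proposition 3.1: applying Proposition 2.9 with $(Y,d)=(\mathbb C \cup \{\infty\},\chi)$, a compact metric space, produces for each $\varepsilon>0$ a finite $F \subseteq I$, a base point $\zeta$, and $g(z)=f(w(z))$ depending only on the coordinates in $F$ with $\chi(f(z),g(z))<\varepsilon/2$ for all $z$. One checks that $g$, viewed on $\overline D^F$, is continuous and still satisfies b'), since its $j_0$-slices are either genuine $j_0$-slices of $f$ (hence in $\tilde A(\overline D)$) or constant slices (which lie in $\tilde A(\overline D)$ as well). It therefore suffices to treat the finite case $I=\{1,\dots,N\}$. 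If $f\equiv\infty$, the constant polynomials $p_n\equiv n$ already work, since $\chi(n,\infty)=1/\sqrt{1+n^2}\to 0$.

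Assume then $f\not\equiv\infty$. The crucial structural step is to show that $f$ omits the value $\infty$ on the open polydisc $D^N$: if $f(z^0)=\infty$ for some $z^0 \in D^N$, then, using b') one coordinate at a time (a one-variable slice in $\tilde A(\overline D)$ that takes the value $\infty$ at an interior point must be $\equiv\infty$), one propagates the value $\infty$ across each coordinate in turn and reaches $f\equiv\infty$, a contradiction. Hence $f(D^N)\subseteq\mathbb C$, and each coordinate slice through an interior point, being a non-$\infty$ element of $\tilde A(\overline D)$, is holomorphic on $D$; thus $f|_{D^N}$ is separately holomorphic and, by Hartogs's theorem, holomorphic on $D^N$. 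Now dilate: for $0<r<1$ put $f_r(z)=f(rz)$, which maps $\overline D^N$ into $(r\overline D)^N \subseteq D^N$, is $\mathbb C$-valued, and is holomorphic on the neighbourhood $(1/r)D^N \supseteq \overline D^N$; its Taylor partial sums are therefore polynomials converging to $f_r$ uniformly on $\overline D^N$ in the Euclidean, and a fortiori, since $\chi(a,b)\le|a-b|$, in the chordal metric. Finally $\sup_{z}\chi(f_r(z),f(z))\to 0$ as $r\to1^-$, because $f$ is uniformly continuous on the compact space $\overline D^N$ for $\chi$ and $|rz-z|=(1-r)|z|\to 0$ uniformly; choosing $r$ near $1$ and then a suitable partial sum, the triangle inequality for $\chi$ delivers a polynomial within $\varepsilon$ of $f$.

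The main obstacle I anticipate is precisely the structural lemma that an $f$ satisfying b') cannot take the value $\infty$ anywhere in the open polydisc unless it is identically $\infty$: this is what licenses the passage through Hartogs's theorem and the Euclidean dilation argument, and it is the one place where the several-variable situation could, in principle, have misbehaved. The remaining ingredients (the reduction via Proposition 2.9, uniform continuity for $\chi$, the elementary bound $\chi\le|\cdot-\cdot|$, and the triangle inequality) are routine and parallel the Euclidean proof of Proposition 3.1.
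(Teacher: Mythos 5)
Your proof is correct, and its overall architecture --- necessity via Theorem 2.5 applied to frozen one-variable slices, reduction to finitely many variables via Proposition 2.9 with $(Y,d)=(\mathbb{C}\cup\{\infty\},\chi)$, then in the finite case the dilation $f(rz)$, Taylor partial sums, the inequality $\chi(a,b)\leq |a-b|$, and the triangle inequality --- is exactly the paper's, which simply declares the proof ``similar to the proof of Proposition 3.1'' with precisely these modifications. You deviate in one genuine ingredient: the structural lemma that $f(z^0)=\infty$ at an interior point of $D^N$ forces $f\equiv\infty$. The paper proves this by invoking Hurwitz's theorem to show that $f^{-1}(\{\infty\})\cap D^N$ is open, and then concluding from closedness and the connectedness of $D^N$; you instead propagate the value $\infty$ one coordinate at a time, using only the definition of $\tilde A(\overline{D})$ (a slice in $\tilde A(\overline{D})$ taking the value $\infty$ at an interior point must be identically $\infty$). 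Your propagation is more elementary --- no Hurwitz, no open--closed--connected argument --- and it is in fact the same mechanism the paper itself uses in the infinite case to spread $\infty$ over the dense set $B$. A second, minor point in your favor: by reducing to the finite case \emph{before} splitting on whether the function is identically $\infty$, you automatically cover functions, such as $\frac{1}{1-z_1}$ on $\overline{D}^{\mathbb{N}}$, which are finite on the interior but take the value $\infty$ at some boundary points; the paper's sketch, read literally, splits the infinite case into ``$f\equiv\infty$'' and ``$f(\overline{D}^I)\subseteq\mathbb{C}$'' and leaves this intermediate case to the implicit Proposition 2.9 reduction. Both routes prove the same statement; yours is slightly more self-contained and closes that small expository gap.
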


The proof is similar to the proof of Proposition 3.1. The only difference is in the finite case, where $I$ contains exactly $N$ points ($N \in \mathbb{N}$), we have $\chi (f(z) , f(rz)) < \frac{\varepsilon}{2}$ instead of $|f(z) - f(rz)| < \frac{\varepsilon}{2}$. Further, the inequality $|Q(w) - f(w)| < \frac{\varepsilon}{2}$ for all $w = (w_1, ... ,w_N)$, $|w_j| \leq r$ implies $\chi (Q(w) , f(w)) < \frac{\varepsilon}{2}$, since $\chi (A , B) \leq |A - B|$ for all $A,B \in \mathbb{C}$. The triangle inequality yields the result in the case where $f$ satisfies a') and b') and $f(\overline{D}^I) \subseteq \mathbb{C}$.

If $f(\zeta) = \infty$ for some $\zeta = (\zeta_1, ... ,\zeta_N)$ with $|\zeta_i| < 1$ for all $i$, then an application of Hurwitz's theorem implies that the set $f^{-1}(\{ \infty \}) \cap D^N$is open where $D = \{ z \in \mathbb{C} : |z| < 1\}$. Since it is also closed in the relative topology and $D^N$ is connected, it follows that if $f(\zeta) = \infty$ for some interior point, then $f \equiv \infty$.

In the general case, where $I$ can be infinite, if $f$ satisfies a') and d') and $f(\zeta) = \infty$ for some $\zeta = (\zeta)_{i \in I}$, $|\zeta_i| < 1$ for all $i \in I$, then, by the previous argument $f_{|B} \equiv \infty$, where 

$$B =
\bigcup_{\substack{
   F \subset J \\
   \text{finite set}
  }}
\overline{D}^{F} \times \prod_{j \in J \setminus F} \{ \zeta_j \}
.$$

Since $B$ is dense in $\overline{D}^I$ and $f$ is continuous, it follows that $f \equiv \infty$ and $f$ can be approximated by the constant polynomials $p_n = n$. In the case where $f(\overline D^I)$ is contained in $\mathbb{C}$, then the result follows from the one in the finite case.

\begin{defn}
$\tilde A(\overline D^I)$ contains exactly all functions $f: \overline D^I \to \mathbb{C} \cup \{ \infty \}$ satisfying a') and b') of Proposition $4.1$.
\end{defn}

As we saw $\tilde A( \overline{D}^I)$ coincides with the set of uniform limits of polynomials on $\overline{D}^I$ with respect to the chordal metric $\chi$. Furthermore, if $f \in \tilde A( \overline{D}^I)$ satisfies $f(\zeta) = \infty$ at some $\zeta = (\zeta_i)_{i \in I}$ with $|\zeta_i| < 1$ for all $i \in I$, it follows easily that $f \equiv \infty$. It is also obvious that $A( \overline{D}^I) \subseteq \tilde A(\overline{D}^I)$.

\begin{defn}
A compact set $K \subseteq T^I$ is called a set of uniqueness for the class $\tilde A(\overline{D}^I)$, if any functions $f,g \in \tilde A(\overline{D}^I)$ which coincide on $K$ they automatically coincide on $\overline D^I$.
\end{defn}

Since $A( \overline{D}^I) \subseteq \tilde A(\overline{D}^I)$, it is obvious that any set of uniqueness for $\tilde A(\overline{D}^I)$ is also a set of uniqueness for $A( \overline{D}^I)$. We do not know if the converse holds or not. However, we obtain similar results for the sets of uniqueness for $\tilde A(D)$ as for $A(D)$. The proofs are similar, mainly because by the theorem of Privalov, the following holds: if $f,g \in \tilde A(\overline{D})$ coincide on a subset of $T$ with positive length, then $f \equiv g$. Therefore, we state these results without proof.

\begin{thm}
Let $K \subseteq T^I$ be a compact set with $\lambda(K) > 0$. Then $K$ is a set of uniqueness for $\tilde A(\overline{D}^I)$.
\end{thm}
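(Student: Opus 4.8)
The plan is to run exactly the same three-stage induction (singleton, finite, denumerable, then uncountable) as in the proof of Theorem 3.5, replacing at each occurrence the one-variable uniqueness statement for $A(\overline{D})$ by its analogue for $\tilde A(\overline{D})$. The one conceptual change that must be respected throughout is that $f$ and $g$ may now assume the value $\infty$, so the difference $f-g$ is no longer available: every reduction has to be phrased as an equality of $f$ and $g$ on slices, never as the vanishing of $f-g$. This is in fact already how the proof of Theorem 3.5 is organised, so the whole skeleton transfers once the correct one-variable input is supplied. That input is the statement recorded just before Definition 2.5 (see also Theorem 2.6): if $f,g\in\tilde A(\overline{D})$ agree on a subset of $T$ of positive length, then $f\equiv g$. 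Crucially, this already incorporates the delicate situation where one of the two functions is identically $\infty$.

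For finite $I=\{1,\dots,n\}$ I would induct on $n$. Fubini's theorem produces a set $J\subseteq T^{n-1}$ of positive measure whose slices $A^y=\{x\in T:(x,y)\in K\}$ have positive length; for each fixed $y\in J$, condition b') of Proposition 4.1 guarantees that $f(\cdot,y)$ and $g(\cdot,y)$ lie in $\tilde A(\overline{D})$, so the one-variable result forces $f(\cdot,y)\equiv g(\cdot,y)$ on $\overline{D}$. Passing to a compact $R\subseteq J$ of positive measure and then fixing the first coordinate $x\in\overline{D}$, condition b') again places $f(x,\cdot),g(x,\cdot)$ in $\tilde A(\overline{D}^{n-1})$, and the induction hypothesis that $R$ is a set of uniqueness for $\tilde A(\overline{D}^{n-1})$ yields $f\equiv g$. (Note that if one function is $\equiv\infty$, this same slicing argument forces the other to be $\infty$ on an interior segment, hence identically $\infty$, which is exactly the conclusion we want.)

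For $I=\mathbb{N}$ I would apply Jessen's theorem (Theorem 2.11) to $\chi_K$, which is measurable and integrable regardless of the target space; this step is purely measure-theoretic and so is completely insensitive to the presence of $\infty$. It supplies a full-measure set of tails $\zeta$ for which the slice measures $\lambda_n(E_n)\to\lambda(K)>0$, so that $E_n$ is eventually a positive-measure set of uniqueness for $\tilde A(\overline{D}^n)$ by the finite case. Fixing such a $\zeta$, the slice functions $f_n,g_n$ belong to $\tilde A(\overline{D}^n)$ and agree on $E_n$, hence agree; therefore $f=g$ on the dense set $\bigcup_n \overline{D}^n\times\{\zeta_{n+1}\}\times\{\zeta_{n+2}\}\times\cdots$, and continuity gives $f\equiv g$. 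For uncountable $I$ I would invoke Corollary 2.10, which is available here because $\mathbb{C}\cup\{\infty\}$ endowed with $\chi$ is a metric space: it yields a countable $F\subseteq I$ on which both $f$ and $g$ depend. The projection $S$ of $K$ to $T^F$ is then compact with $\lambda_F(S)\ge\lambda_I(K)>0$, and $f=g$ on $K$ descends to $f=g$ on $S$; the denumerable case finishes the argument.

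The main obstacle is not computational but is the bookkeeping of the class membership: I must be certain that each dimension reduction keeps the restriction inside the correct space $\tilde A(\overline{D}^k)$, and that no step silently subtracts two possibly infinite functions. Both worries are dispatched by condition b') of Proposition 4.1, which is precisely what guarantees that every one-variable or lower-dimensional slice of an element of $\tilde A(\overline{D}^I)$ again belongs to the corresponding spherical class, together with the fact that the one-variable uniqueness statement already copes with the value $\infty$. Since the measure-theoretic machinery (Fubini and Jessen) does not see the change of target space, no genuinely new difficulty arises beyond what Theorem 3.5 already overcame.
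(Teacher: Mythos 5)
Your proposal is correct and takes essentially the same approach the paper intends: the paper states this theorem without proof, remarking only that the arguments of Section~3 carry over because Privalov's theorem yields that two functions in $\tilde A(\overline{D})$ agreeing on a boundary set of positive length are identical, and your three-stage induction (finite case via Fubini, $I=\mathbb{N}$ via Jessen's Theorem~2.11, uncountable $I$ via Corollary~2.10) is precisely the proof of Theorem~3.5 with that one-variable input substituted. Your explicit bookkeeping --- that every reduction is phrased as equality of slices lying in $\tilde A(\overline{D}^k)$ by condition b') of Proposition~4.1 rather than as vanishing of $f-g$, and that the case $f\equiv\infty$ propagates correctly through the slicing --- fills in exactly the details the paper leaves implicit.
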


The set $K$ in the example 3.7 is also good for $\tilde A(\overline{D}^2)$ and $\tilde A(\overline{D}^I)$. Finally cartesian products of sets of uniqueness for $\tilde A(\overline{D}^{I_j})$ are sets of uniqueness for $\tilde A(\overline{D}^I)$ for any partition $(I_j)_{j \in J}$ of $I$; the proof is similar with that of Proposition 3.8.

\section{The case of the Ball of $\mathbb{C}^n$}

In this section we extend the results of Section 3 and 4 replacing $\overline{D}^I$ by the Euclidean ball $\overline{B}_n = \{z = (z_1, ... , z_n) : |z_1^2|+ ... +|z_n^2| \leq 1 \}$ of $\mathbb{C}^n$. Some proofs are similar since the Taylor expansion in several variables converges absolutely and uniformly to the function not only on any closed polydisc contained in the open domain of holomorphy $\Omega$ of the function, but also at any closed concentric ball contained in $\Omega$ (\cite{Rud3.}). Therefore, the set of uniform limits on $\overline{B}_n$ of the polynomials is the algebra $A(\overline{B}_n)$ if the usual Euclidean metric on $\mathbb{C}$ is used and the class $\tilde A(\overline{B}_n)$ if the chordal metric $\chi$ on $\mathbb{C} \cup \{ \infty \}$ is used, defined below.

\begin{defn}
$A(\overline{B}_n)$ contains exactly all functions $f: \overline{B}_n \to \mathbb{C}$ continuous on $\overline{B}_n$ and holomorphic in $B_n = \{z = (z_1, ... , z_n) : |z_1^2|+ ... +|z_n^2| < 1 \}$.
\end{defn}

\begin{defn}
$\tilde A(\overline{B}_n)$ contains exactly all functions $f: \overline{B}_n \to \mathbb{C} \cup \{ \infty \}$ continuous on $\overline{B}_n$, such that for every $z = (z_1, ... ,z_n) \in \overline{B}_n$ and every $k$, $1 \leq k \leq n$ the function
\\ 
$\{w \in \mathbb{C} : |w| \leq \sqrt {1 - (|z_1|^2 + ... +|z_{k-1}|^2+|z_{k+1}|^2+ ... +|z_n|^2)} \} \ni w \to f(z_1, ... ,z_{k-1},w,z_{k+1}, ... , z_n) \in \mathbb{C} \cup \{ \infty \}$
belongs to 
\\
$\tilde A(\{w \in \mathbb{C} : |w| \leq \sqrt {1 - (|z_1|^2 + ... +|z_{k-1}|^2+|z_{k+1}|^2+ ... +|z_n|^2)} \})$.
\end{defn}

We notice that $\tilde A$ of a closed disc with zero radius is meant to be all constants in $\mathbb{C} \cup \{ \infty \}$.

\begin{rem}
If $f \in \tilde A(\overline{B}_n)$  satisfies $f(\zeta) = \infty$ for some $\zeta \in B_n$, then $f = \infty$; otherwise $f(B_n) \subseteq \mathbb{C}$ and $f$ is holomorphic in $B_n$.
\end{rem}

\begin{defn}
Let $K$ be a compact subset of $\partial \overline B_n =$
\\
$= \{ (\zeta_1, ... ,\zeta_n) \in \mathbb{C}^n : |\zeta_1|^2+ ... +|\zeta_n|^2 = 1 \}$. Then $K$ is called a set of uniqueness for $A(\overline B_n)$ (respectively for $\tilde A(\overline B_n)$) if and only if for any functions $f,g \in A(\overline B_n)$(respectively in $\tilde A(\overline B_n)$) which coincide on $K$ then they coincide on $\overline B_n$.

Since $A(\overline B_n) \subseteq \tilde A(\overline B_n)$, it follows that any set of uniqueness for $\tilde A(\overline B_n)$ is also a set of uniqueness for $A(\overline B_n)$. We do not know if the converse holds.
\end{defn}

\begin{rem}
The set $\{ z=(z_1, ... ,z_n) \in \partial B_n : |z_j| > 0 \; \text{for all} \; j = 1, \dots ,n\}$ can be written as 
\\
$\cup_{(r_1, ... ,r_{n-1}) \in G} X_{r_1, ... ,r_{n-1}}$, where 
\\
$X_{r_1, ... ,r_{n-1}} = \{ (r_1e^{i \theta_1}, ... ,r_{n-1}e^{i \theta_{n-1}},r_ne^{i \theta_n}) \in \partial B_n : \theta_j \in \mathbb{R},\;  j = 1, ... ,n\}$,
\\
$G = \{ (r_1, ... ,r_{n-1}) : r_j > 0 \; \text{for all} \; j = 1, ... ,n-1 \; , \; r_1^2+ ... +r_{n-1}^2 < 1 \}$ and
$r_n = \sqrt{1 - (r_1^2+ ... +r_{n-1}^2)}$.
\\
The set $X_{r_1, ... ,r_{n-1}} \cong T^n$ is the  boundary of the polydisc  $\overline D(0,r_1) \times ... \times \overline D(0,r_n)$ which is a subset of $\overline B_n$.
\\
If a compact subset $K \subseteq \partial \overline B_n$ meets $X_{r_1, ... ,r_{n-1}}$ for some $(r_1, ... ,r_{n-1}) \in G$ and the intersection is a set of uniqueness for  $A( \overline D(0,r_1) \times ... \times \overline D(0,r_n))$ (or ($\tilde A( \overline D(0,r_1) \times ... \times \overline D(0,r_n))$) respectively), then the following holds.
\\
If two functions $f,g \in A(\overline B_n)$ (or $f,g \in \tilde A(\overline B_n)$ respectively) coincide on $K$, then according to the result of Sections 3 and 4 they coincide on $\overline D(0,r_1) \times ... \times \overline D(0,r_n)$. It follows easily that $f \equiv g$ on $\overline B_n$. Therefore $K$ is a set of uniqueness for $A(\overline B_n)$ (for $\tilde A(\overline B_n)$ respectively).
\end{rem}

\begin{prop}
If $K \subseteq \partial \overline B_n$ is a compact set with positive measure on the hyper-surface $\partial \overline B_n =\subseteq \mathbb{R}^{2n} = \mathbb{C}^n$ (of real dimension $2n-1$), then $K$ is a set of uniqueness for $A(\overline B_n)$ and $\tilde A(\overline B_n)$.
\end{prop}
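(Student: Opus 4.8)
The plan is to reduce the statement to the polydisc results of Sections 3 and 4 by slicing $\partial \overline B_n$ into the tori $X_{r_1,\dots,r_{n-1}}$ introduced in Remark 5.5. Indeed, Remark 5.5 already reduces everything to finding a single $(r_1,\dots,r_{n-1}) \in G$ such that $K \cap X_{r_1,\dots,r_{n-1}}$ is a set of uniqueness for $A(\overline D(0,r_1)\times\dots\times\overline D(0,r_n))$ (respectively for $\tilde A$ of that polydisc): once such an $r$ is found, any $f,g$ in $A(\overline B_n)$ (resp. $\tilde A(\overline B_n)$) that agree on $K$ agree on $K\cap X_{r_1,\dots,r_{n-1}}\subseteq K$, hence on the whole polydisc, hence on $\overline B_n$. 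On the other hand, rescaling each coordinate $z_j \mapsto z_j/r_j$ is a homeomorphism of $\overline D(0,r_1)\times\dots\times\overline D(0,r_n)$ onto $\overline D^n$, biholomorphic in the interior, carrying $X_{r_1,\dots,r_{n-1}}$ onto $T^n$ and the normalised measure onto the normalised measure; so by Theorem 3.5 (resp. Theorem 4.4) it suffices to produce one $r\in G$ for which $K\cap X_{r_1,\dots,r_{n-1}}$, viewed inside $T^n$, is compact and has strictly positive product measure. Compactness is automatic, since $X_{r_1,\dots,r_{n-1}}$ is compact and $K$ is closed. Thus the entire content of the proposition is the measure-theoretic claim that $\lambda(K)>0$ forces such an $r$ to exist.

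To establish that claim I would disintegrate the surface measure on $\partial \overline B_n$ along the fibration $\{X_{r_1,\dots,r_{n-1}}\}$. Writing $z_j = r_j e^{i\theta_j}$ with $r_j\ge 0$ and $\sum_{j=1}^n r_j^2 = 1$, the set where some coordinate vanishes is a finite union of lower-dimensional pieces and therefore has $(2n-1)$-dimensional surface measure zero; hence we may discard it and work on $\bigcup_{(r_1,\dots,r_{n-1})\in G} X_{r_1,\dots,r_{n-1}}$, on which $(r_1,\dots,r_{n-1},\theta_1,\dots,\theta_n)$ with $(r_1,\dots,r_{n-1})\in G$ and $(\theta_1,\dots,\theta_n)\in T^n$ are genuine coordinates ($r_n = \sqrt{1-(r_1^2+\dots+r_{n-1}^2)}$ being determined). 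A Jacobian computation shows that in these coordinates the surface measure takes the form $h(r_1,\dots,r_{n-1})\, dr_1\cdots dr_{n-1}\, d\theta_1\cdots d\theta_n$ with a density $h$ that is continuous and strictly positive on $G$ (it is comparable to $r_1\cdots r_n$). Since on each slice the measure is, up to the positive factor $h(r)$, exactly the product measure on $T^n$, an application of Fubini's theorem to $\chi_K\, h$ gives that $\lambda(K)>0$ implies $\int_G \big(\text{$T^n$-measure of }K\cap X_{r_1,\dots,r_{n-1}}\big)\, h\, dr > 0$, so the $T^n$-measure of $K\cap X_{r_1,\dots,r_{n-1}}$ is strictly positive for a set of $r\in G$ of positive $(n-1)$-measure; in particular for at least one $r$, which is all we need.

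The main obstacle is exactly this disintegration step: one must verify that the surface measure of $\partial \overline B_n$ factors as claimed, with a density that does not degenerate on the good region $G$. Concretely this amounts to the coarea/polar decomposition of the sphere $S^{2n-1}\subset \mathbb{R}^{2n}$ adapted to the $n$ moduli $r_1,\dots,r_n$ and the $n$ angles, together with the elementary but slightly tedious check that the resulting Jacobian is positive away from the coordinate hyperplanes; the handling of the measure-zero exceptional set where some $z_j=0$ is the only other point requiring care. Once the decomposition and the positivity of $h$ on $G$ are in hand, the rest is a direct combination of Theorem 3.5 (respectively Theorem 4.4) with Remark 5.5 as described above, and the proof closes for both $A(\overline B_n)$ and $\tilde A(\overline B_n)$ simultaneously.
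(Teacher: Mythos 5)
Your proposal is correct and follows essentially the same route as the paper: write the surface measure on $\partial \overline B_n$ in the coordinates $(r_1,\dots,r_{n-1},\theta_1,\dots,\theta_n)$ with a positive smooth density on $G\times T^n$, apply Fubini to $\chi_K$ to find one slice $X_{r_1,\dots,r_{n-1}}$ on which $K$ has positive $n$-dimensional measure, invoke the polydisc results of Sections 3 and 4, and conclude via Remark 5.5. You merely make explicit two details the paper leaves implicit --- the rescaling $z_j \mapsto z_j/r_j$ needed to apply Theorem 3.5 (resp.\ Theorem 4.4) on the unit polydisc, and the discarding of the measure-zero set where some coordinate vanishes --- which is fine.
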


\begin{proof}
The natural measure on $\partial \overline B_n$ has the form
\\
$q(r_1, ... ,r_{n-1},e^{i \theta_1}, ... ,e^{i \theta_n}) \mathrm{d} \theta_1 \cdot ... \cdot \mathrm{d} \theta_n \cdot \mathrm{d} r_1 \cdot ... \cdot \mathrm{d} r_{n-1}$ where the function $q$ is a positive $C^{\infty}$ function on $G \times T^n$ and $G = \{ (r_1, \dots ,r_{n-1}) : r_j > 0 \; \text{for} \; j = 1, \dots ,n-1 \; \text{and} \; r_1^2+ ... +r_{n-1}^2 < 1 \}$.
\\
By Fubini's theorem applied to the characteristic function $\chi_K$, it follows that there exists a $(r_1, ... ,r_{n-1}) \in G$, such that $K \cap X_{r_1, ... ,r_{n-1}}$ has positive measure (of dimension $n$).
\\
Thus, by the results of Section 3 and 4 this intersection is a set of uniqueness for the polydiscs $\overline D(0,r_1) \times ... \times \overline D(0,r_n)$. By the previous discussion, it follows that $K$ is a set of uniqueness for $A(\overline B_n)$ (or for  $\tilde A(\overline B_n)$, respectively). This completes the proof.
\end{proof}

Obviously there are plenty of compact sets $K \subseteq \partial \overline B_n$ which are sets of uniqueness for $\tilde A(\overline B_n)$ and have zero measure; for instance, 
\\
let $K = X_(r_1, ... ,r_{n-1})$ for some $(r_1, ... ,r_{n-1}) \in G$.

\section*{Acknowledgements}
We would like to thank G. Alexopoulos, C. Carayiannis, N. Daras, K. Ecker, D. Gatzouras, P.M Gauthier, J. Globevnik, G. Knese, G. Koumoullis, 
\\
M. Manolaki and M. Papatriantafillou for helpful communications.

~\\
~\\
University of Athens
\\
Department of Mathematics
\\
157 84 Panepistimioupolis
\\
Athens
\\
Greece
~\\
~\\
\textbf{email addresses:}
\\
K. Makridis (kmak167@gmail.com)
\\
V. Nestoridis (vnestor@math.uoa.gr)
\end{document}